\newtheorem{theorem}{Theorem}[section]
\newtheorem{proposition}[theorem]{Proposition}
\newtheorem{lemma}[theorem]{Lemma}
\newtheorem{corollary}[theorem]{Corollary}
\theoremstyle{definition}
\newtheorem{example}[theorem]{Example}
\newtheorem{definition}[theorem]{Definition}
\newcommand{\bigzero}{\mbox{\normalfont\Large\bfseries 0}}
\begin{document}
	
\author[M. Doostalizadeh]{Mina Doostalizadeh}
\address{Department of Mathematics, Tarbiat Modares University, 14115-111 Tehran Jalal AleAhmad Nasr, Iran}
\email{d\_mina@modares.ac.ir;  m.doostalizadeh@gmail.com}	
\author[A. Moussavi]{Ahmad Moussavi}
\address{Department of Mathematics, Tarbiat Modares University, 14115-111 Tehran Jalal AleAhmad Nasr, Iran}
\email{moussavi.a@modares.ac.ir; moussavi.a@gmail.com}
\author[P. Danchev]{Peter Danchev}
\address{Institute of Mathematics and Informatics, Bulgarian Academy of Sciences, 1113 Sofia, Bulgaria}
\email{danchev@math.bas.bg; pvdanchev@yahoo.com}

\title[Rings whose non-units are square-nil clean]{Rings whose non-units are square-nil clean}
\keywords{unit, (square-)nil clean element (ring), (strongly) square-nil clean element (ring), Matrix ring, Group ring}
\subjclass[2010]{16S34,16U60}

\maketitle




\begin{abstract} We consider in-depth and characterize in certain aspects the class of so-called {\it strongly NUS-nil clean rings}, that are those rings whose non-units are {\it square nil-clean} in the sense that they are a sum of a  nilpotent and a square-idempotent that commutes with each other. This class of rings lies properly between the classes of strongly nil-clean rings and strongly clean rings. In fact, it is proved the valuable criterion that a ring $R$ is strongly NUS-nil clean if, and only if, $a^4-a^2\in Nil(R)$ for every $a\not\in U(R)$. In particular, a ring $R$ with only trivial idempotents is strongly NUS-nil clean if, and only if, $R$ is a local ring with nil Jacobson radical. Some special matrix constructions and group ring extensions will provide us with new sources of examples of NUS-nil clean rings.
\end{abstract}

\section{Introduction and Basic Facts}

Throughout (the rest of) the present paper, all rings are supposed to be associative with identity and all modules are unitary. We denote by $U(R)$, $Nil(R)$ and $Id(R)$ the set of invertible elements, the set of nilpotent elements and the set of idempotent elements in $R$, respectively. Likewise, $J(R)$ denotes the Jacobson radical of $R$, and $Z(R)$ denotes the center of $R$. The ring of $n\times n$ matrices over $R$ and the ring of $n\times n$ upper triangular matrices over $R$ are, respectively, denoted by ${\rm M}_{n}(R)$ and ${\rm T}_{n}(R)$.

\medskip

Mimicking Nicholson \cite{16} and \cite{17}, a ring is called {\it clean} if every its element can be written as a sum of a unit element and an idempotent element. Moreover, a ring $R$ is termed {\it exchange} if, for each $a \in R$, there exists an idempotent $ e \in aR$ such that $1-e \in (1-a)R$. Every clean ring is known to be exchange, but the converse implication is manifestly {\it not} valid in general; however it is true in the abelian case (see, e.g., \cite[Proposition 1.8]{3}).

In \cite{17}, an element $c$ of a ring $R$ is defined to be {\it strongly clean} if there is an idempotent $e\in R$, which commutes with $c$, such that $c-e$ is invertible in $R$. Analogously, a strongly clean ring is the one in which every element is strongly clean. Furthermore, it is shown that a strongly clean element of a ring satisfies a generalized version of the classical Fitting's Lemma. This yields (cf. \cite[Section 10]{Boro}) that each strongly $\pi$-regular element of a ring is strongly clean. (We also refer to \cite{ckl} for an excellent overview of the relationship between the clean property and other classical ring theory notions.)

\medskip

Next, imitating Diesl \cite{2}, a ring $R$ is said to be {\it (strongly) nil-clean} if, for each $r\in R$, there are $q\in Nil(R)$ and $e\in Id(R)$ such that $r=q+e$ (in addition, $qe=eq$). It was proven there many fundamental properties as well as developed a general theory of (strongly) nil-clean rings.

\medskip

Following Danchev et al. \cite{5}, a ring $R$ is called {\it generalized strongly nil-clean}, and briefly abbreviated by GSNC, if any non-invertible element of $R$ is strongly nil-clean.

\medskip

Now, we say that an idempotent $a\in R$ is said to be {\it square-idempotent}, provided $a^2=a^4$. Motivated by the above discussion, we introduce and study the notion of {\it NUS-nil clean} rings. So, a ring $R$ is called {\it (strongly) square-nil clean} if, for each $a\in R$, there are a square-idempotent $e\in R$ and an element $n\in Nil(R)$ such that $a=e+n$ (in addition, $en=ne$). If any non-unit (i.e., any non-invertible element) in a ring $R$ is    (strongly) square-nil clean, we say $R$ is (strongly) NUS-nil clean. The class of strongly NUS-nil clean rings lies strictly between the strongly nil-clean rings and strongly clean rings. Specifically, we establish that: a ring $R$ is strongly NUS-nil clean if, and only if, $a^4-a^2\in Nil(R)$ for all $a\not\in U(R)$. We, moreover, prove that this property passes to corner rings and is {\it not} Morita invariant. Also, the direct product of strongly NUS-nil clean rings is too a strongly NUS-nil clean. In particular, strongly NUS-nil clean rings are always strongly $\pi$-regular. Likewise, the Jacobson radical of a strongly NUS-nil clean ring is a nil-ideal. Finally, a ring $R$ with only trivial idempotents is strongly NUS-nil clean if, and only if, $R$ is a local ring with nil Jacobson radical. Certain matrix constructions and group ring extensions will reach us with some new types of examples of NUS-nil clean rings.

\section{Strongly NUS-Nil Clean Rings}

In this section, we officially introduce the concept of strongly NUS-nil clean rings and investigate their elementary but useful properties.

\begin{definition}\label{2.1} Let $R$ be a ring. An element $e\in R$ is said to be {\it square-idempotent}, provided $e^2=e^4$. Additionally, an element $a\in R$ is said to be {\it square-nil clean} if $a=e+n$, where $e$ is a square-idempotent and $n$ is a nilpotent. In particular, if $en=ne$, the element is called {\it strongly square-nil clean}. Particularly, a ring $R$ is called {\it (strongly) NUS-nil clean} if every non-unit (i.e., every non-invertible) element in $R$ is (strongly) square-nil clean.
\end{definition}

We begin our work with the next series of preliminary technicalities.

\begin{lemma}\label{2.2}
Every strongly NUS-nil clean element is clean.
\end{lemma}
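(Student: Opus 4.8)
The plan is to start from a strongly square-nil clean element, written $a = e + n$ with $e^2 = e^4$, $n \in Nil(R)$ and $en = ne$, and to produce an explicit clean decomposition $a = v + g$ with $v \in U(R)$ and $g \in Id(R)$. The guiding observation is that the relation $e^2 = e^4$ says precisely that $f := e^2$ is a genuine idempotent, and that $f$ commutes with $e$ and with $n$, hence with $a$. So $f$ and $1 - f$ may be used to split $a$ into two Peirce pieces that can be handled separately, and the natural candidate for the idempotent summand is $g = 1 - f$; thus the whole task reduces to showing that $v := a - (1-f)$ is a unit of $R$.

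First I would record the elementary consequences of $e^4 = e^2$ (and hence $e^5 = e^3$, $e^6 = e^2$): $f = e^2 \in Id(R)$; $f$ commutes with $e$, with $n$, and with $a$; $e^3 = f e^3 = e^3 f$, so $e^3 \in f R f$; $(e^3)^2 = e^6 = e^2 = f$, so $e^3$ is a unit of the corner ring $f R f$ that is its own inverse; and $e - e^3 = e(1-f)$ is square-zero and lies in $(1-f) R (1-f)$. Each of these is a one-line computation.

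Next I would compute the two Peirce components of $v = a + f - 1$. On the $f$-side, $f v = f a = e^3 + f n$, a sum of the unit $e^3$ and the nilpotent $f n$ of $f R f$, which commute; so $f v = e^3 ( f + e^3 f n )$ is a product of two units of $f R f$ and hence a unit of $f R f$. On the $(1-f)$-side, $(1-f) v = (1-f)(a - 1) = (e - e^3) + (1-f) n - (1-f)$, where $e - e^3$ and $(1-f) n$ are commuting nilpotents of $(1-f) R (1-f)$; thus $(1-f) v = q - (1-f)$ with $q$ nilpotent, so $(1-f) v$ is the negative of an ``identity minus nilpotent'' and is a unit of $(1-f) R (1-f)$.

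Finally, since $f$ commutes with $v$ one has $v = f v f + (1-f) v (1-f)$ with the off-diagonal Peirce components zero; by the standard fact that such a $v$ is invertible in $R$ exactly when both corner components are invertible in the respective corner rings, we get $v \in U(R)$, and $a = v + (1-f)$ exhibits $a$ as clean (indeed strongly clean, since $1-f$ commutes with $v$). The one genuine subtlety — the main obstacle — is that $f = e^2$ need not be central in $R$, so $R$ does not literally decompose as a direct product; the remedy is to work throughout inside the corner rings $f R f$ and $(1-f) R (1-f)$, using only that $f$ commutes with the few elements $e, n, a, v$ that actually occur, and to keep track that the units and nilpotents produced really do live in those corners.
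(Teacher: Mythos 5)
Your decomposition $a=(1-f)+v$ with $f=e^{2}$ and $v=e+e^{2}+n-1$ is exactly the one the paper uses ($a=1-e^{2}+(e-1+e^{2}+n)$), and your Peirce-corner verification that $v$ is a unit correctly fills in the step the paper dismisses as ``easily seen.'' The proposal is correct and takes essentially the same approach as the paper.
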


\begin{proof}
Assume $a = e + n$, where $a$ is a non-unit, $e$ is a square-idempotent and $n$ is a nilpotent element of a ring $R$ that commutes with $e$. It is now easily seen that $a=1-e^2+e-1+e^2+n$ and $e-1+e^2+n\in U(R)$, as needed.
\end{proof}

Our immediate consequence is the following.

\begin{corollary}\label{2.3}
Every strongly NUS-nil clean ring is strongly clean.
\end{corollary}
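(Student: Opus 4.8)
The plan is to read off Corollary~\ref{2.3} from Lemma~\ref{2.2}, after noticing that the clean decomposition exhibited in the proof of that lemma is in fact a \emph{strongly} clean decomposition whenever the starting element is \emph{strongly} square-nil clean. So I would let $R$ be a strongly NUS-nil clean ring, fix an arbitrary element $a\in R$, and argue that $a$ is strongly clean by splitting into two cases according to whether $a$ is a unit.

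First, if $a\in U(R)$, then $a=0+a$ already witnesses that $a$ is strongly clean, since $0$ is an idempotent and commutes with $a$.

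Second, if $a\notin U(R)$, then by hypothesis there are a square-idempotent $e$ and a nilpotent $n$ with $a=e+n$ and $en=ne$. Set $f=1-e^2$. The relation $e^2=e^4$ gives $f^2=1-2e^2+e^4=1-e^2=f$, so $f$ is a genuine idempotent; and since $e$ commutes with $n$, so does $e^2$, hence $f=1-e^2$ commutes with $a=e+n$. The computation in the proof of Lemma~\ref{2.2} shows that $a-f=e-1+e^2+n\in U(R)$. Therefore $a=f+(a-f)$ is a strongly clean decomposition of $a$, and since $a$ was arbitrary, $R$ is strongly clean.

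I do not anticipate any real difficulty here: the only point beyond quoting Lemma~\ref{2.2} is the bookkeeping observation that its unit summand commutes with $a$ precisely because $e$ commutes with $n$, together with the trivial remark that units are always strongly clean. If one preferred, one could instead first sharpen Lemma~\ref{2.2} to ``every strongly square-nil clean element is strongly clean'', after which Corollary~\ref{2.3} becomes immediate modulo the unit case.
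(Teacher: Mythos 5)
Your proof is correct and follows essentially the same route as the paper, which deduces the corollary directly from Lemma~\ref{2.2} using the very decomposition $a=(1-e^2)+(e-1+e^2+n)$ you describe. You merely make explicit two points the paper leaves implicit --- that $1-e^2$ is an idempotent commuting with $a$ (so the decomposition is in fact \emph{strongly} clean) and that units are trivially strongly clean --- which is careful bookkeeping, not a departure in method.
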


In other words, the class of strongly NUS-nil clean rings lies in a proper way between the classes of strongly nil-clean rings and strongly clean rings. Indeed, it is worthwhile noting that $\mathbb{Z}_3\times \mathbb{Z}_3$, $M_2(\mathbb{Z}_3)$ and $M_2(\mathbb{Z}_2)$ are all strongly NUS-nil clean rings that are {\it not} strongly   nil-clean, while $\mathbb{Z}_{2}[[x]]$ and $\mathbb{Z}_{3}[[x]]$ are strongly clean rings that are {\it not} strongly NUS-nil clean.

\begin{lemma}\label{2.4}
Let $R_{i}$ be a ring for all $i\in I$. The direct product $\prod_{i=1}^{n} R_i$ is strongly NUS-nil clean if, and only if, each direct component $R_{i}$ is strongly square-nil clean.
\end{lemma}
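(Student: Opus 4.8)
Note the lemma as stated is slightly informal: $I$ is indexed but the product runs $\prod_{i=1}^n$, so in practice we are dealing with a finite product $R = R_1 \times \cdots \times R_n$. I will treat it as such; the infinite case would additionally require that "non-unit in the product" be understood correctly, but the finite reduction is what the proof should address.

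\medskip

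The plan is to exploit the fact that units, nilpotents, and square-idempotents in a finite direct product are all computed componentwise, so the whole statement should reduce to bookkeeping about which tuples are non-units. First I would fix notation: write an element of $R = \prod_{i=1}^n R_i$ as $a = (a_1, \dots, a_n)$, and recall the elementary facts that $a \in U(R)$ iff $a_i \in U(R_i)$ for every $i$; that $a \in Nil(R)$ iff each $a_i \in Nil(R_i)$; and that $e$ is a square-idempotent in $R$ (i.e.\ $e^2 = e^4$) iff each $e_i$ satisfies $e_i^2 = e_i^4$ in $R_i$. Similarly $en = ne$ holds in $R$ iff $e_i n_i = n_i e_i$ for all $i$. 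These are all immediate and I would state them in a single sentence rather than belabor them.

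\medskip

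For the ``if'' direction, suppose every $R_i$ is strongly square-nil clean and let $a = (a_1,\dots,a_n)$ be a non-unit of $R$. Then each $a_i$, whether or not it is a unit of $R_i$, is strongly square-nil clean \emph{because the whole ring $R_i$ is}, so write $a_i = e_i + n_i$ with $e_i^2 = e_i^4$, $n_i \in Nil(R_i)$, and $e_i n_i = n_i e_i$. Setting $e = (e_1,\dots,e_n)$ and $n = (n_1,\dots,n_n)$ gives $a = e + n$ with $e$ a square-idempotent, $n$ nilpotent, and $en = ne$ by the componentwise remarks; hence $a$ is strongly square-nil clean in $R$, and $R$ is strongly NUS-nil clean. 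This is the easy direction and the key point worth emphasizing is that we need each $R_i$ \emph{strongly square-nil clean as a ring} (every element), not merely NUS-nil clean, precisely so that the unit components $a_i$ can still be decomposed.

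\medskip

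For the ``only if'' direction, suppose $R$ is strongly NUS-nil clean; I want to show each $R_j$ is strongly square-nil clean, i.e.\ \emph{every} $b \in R_j$ — including units — is strongly square-nil clean. The trick is to embed $b$ into a non-unit of $R$ in the $j$-th slot and pad the other slots with a non-unit, e.g.\ $0$: form $a = (0,\dots,0,b,0,\dots,0)$ with $b$ in position $j$. Since $n \geq 1$ wait — actually if $n = 1$ then $R = R_1$ and there is nothing to pad, but then ``non-unit of $R$'' is not all of $R$, so this argument needs $n \geq 2$ or a separate trivial remark; I would simply note that for $n=1$ the statement ``$R_1$ is strongly NUS-nil clean iff $R_1$ is strongly square-nil clean'' is false in general, so the intended hypothesis is $n \geq 2$, or more cleanly one pads with a genuinely non-invertible ring such as requiring at least one factor, and I would phrase the padding to use a zero in \emph{some} other coordinate. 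Assuming $n\geq 2$: the element $a$ has a zero component, hence is not a unit of $R$, so by hypothesis $a = e + n$ is strongly square-nil clean in $R$; reading off the $j$-th coordinate gives $b = e_j + n_j$ with $e_j^2 = e_j^4$, $n_j \in Nil(R_j)$, $e_j n_j = n_j e_j$, so $b$ is strongly square-nil clean in $R_j$. Since $b$ was arbitrary, $R_j$ is strongly square-nil clean.

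\medskip

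The main obstacle is really the edge case $n = 1$ and, relatedly, making sure the ``only if'' padding produces a genuine non-unit: this is automatic as soon as we insert a non-invertible element (such as $0$, assuming $R_j \neq 0$, or more robustly using that a tuple with a zero coordinate is never a unit when there are $\geq 2$ coordinates) into some coordinate other than $j$. There is no deep content; the whole proof is the observation that the three relevant element classes are coordinatewise, plus the padding trick to upgrade ``NUS-nil clean in the product'' to ``square-nil clean in each factor.'' I would write it in about fifteen lines.
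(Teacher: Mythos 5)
Your proof is correct, and the core device -- padding an arbitrary element of $R_j$ into the $j$-th slot of a tuple that is forced to be a non-unit, then reading the decomposition off coordinatewise -- is exactly the paper's. In fact your write-up is more complete than the paper's proof, which only argues the ``only if'' direction and, moreover, starts from ``$a_i \notin U(R_i)$''; as you correctly observe, that restriction would only show each $R_i$ is strongly NUS-nil clean, whereas the stated conclusion (strongly \emph{square-nil clean}, i.e.\ every element decomposes) requires taking $a_i$ arbitrary, which is legitimate precisely because the padded tuple is a non-unit regardless. Your remark that the equivalence fails for $n=1$ (the paper's own example $\mathbb{Z}_5$ witnesses this) is also a genuine observation the paper glosses over, and your explicit treatment of the ``if'' direction, which the paper omits entirely but uses later (e.g.\ in Proposition~\ref{2.9}), is welcome.
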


\begin{proof}
Letting $a_{i}\in R_{i}$ for some index $i$, where $a_{i}\notin U(R_{i})$, whence $$(0,\ldots,0,a_{i},0,\ldots,0)\notin  U(\prod^{n}_{i=1}R_{i}).$$ So, $(0,\ldots,0,a_{i},0,\ldots,0)$ is strongly square-nil clean, and hence $a_{i}$ is strongly square-nil clean. That is why, any $R_{i}$ is strongly square-nil clean, as required.
\end{proof}

\begin{example}
Obvious calculations illustrate that the commutative ring $\mathbb{Z}_5$ is strongly NUS-nil clean, but is not strongly square-nil clean.	
\end{example}

Recall the pivotal fact that, in view of \cite[Lemma 2.6]{zhou}, if $2 \in U(R)$ and $a^3 -a$ is a nilpotent, then there exists a polynomial $\alpha(t) \in Z[t]$ such that $\alpha(a)^3 = \alpha(a)$ and $a -\alpha(a)$ is a nilpotent.

\medskip

We now manage to proceed by proving the following helpful necessary and sufficient condition.

\begin{theorem}\label{7} Let $R$ be a ring. Then, $R$ is strongly NUS-nil clean if, and only if, $a^4-a^2\in Nil(R)$ for every $a\not\in U(R)$.
\end{theorem}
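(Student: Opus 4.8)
The plan is to establish the two implications separately, with the forward direction being essentially immediate and the reverse direction carrying the real content. For the forward direction, suppose $R$ is strongly NUS-nil clean and take any $a\notin U(R)$. Write $a=e+n$ with $e^2=e^4$, $n\in Nil(R)$, and $en=ne$. Then $a^2-a^4 = (e+n)^2-(e+n)^4$; expanding and using commutativity, every term is divisible by $n$ except the pure powers of $e$, and $e^2-e^4=0$. Hence $a^2-a^4$ is a $\Z[e,n]$-combination of powers of $n$, so it lies in $Nil(R)$ (the subring $\Z[e,n]$ is commutative, so the nilpotent $n$ generates a nil ideal there). This gives $a^4-a^2\in Nil(R)$.

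For the converse, assume $a^4-a^2\in Nil(R)$ for every non-unit $a$, and fix such an $a$. The goal is to produce a square-idempotent $e$ commuting with $a$ with $a-e$ nilpotent. The natural idea is to work inside the commutative subring $S=\Z[a]\subseteq R$ and find a polynomial $\alpha(t)\in\Z[t]$ with $\alpha(a)^2=\alpha(a)^4$ and $a-\alpha(a)\in Nil(R)$; any such $\alpha(a)$ automatically commutes with $a$. Here I would invoke (and adapt) the cited fact from \cite[Lemma 2.6]{zhou}: when $2\in U(R)$ and $b^3-b\in Nil(R)$ one gets $\alpha(b)^3=\alpha(b)$ with $b-\alpha(b)$ nilpotent. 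The trick is to apply this not to $a$ but to $b=a^2$, since $b^3-b = a^6-a^2 = (a^4-a^2)(a^2+1)$ — wait, more carefully $a^6-a^2 = a^2(a^4-1) = a^2(a^2-1)(a^2+1)$, and since $a^4-a^2 = a^2(a^2-1)\in Nil(R)$ we get $a^6-a^2\in Nil(R)$, i.e. $b^3-b\in Nil(R)$ provided $b=a^2$ is a non-unit (which it is, as $a\notin U(R)$). Then the Zhou-type lemma yields $\varepsilon=\alpha(a^2)$ with $\varepsilon^3=\varepsilon$ and $a^2-\varepsilon\in Nil(R)$; such an $\varepsilon$ satisfies $\varepsilon^2=\varepsilon^4$, so $\varepsilon$ is a square-idempotent, and it commutes with $a$. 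It then remains to check $a-\varepsilon\in Nil(R)$: from $a^2-\varepsilon\in Nil(R)$ and $\varepsilon^2=\varepsilon^4$ one computes $(a-\varepsilon)$ modulo $Nil(R)$ inside the commutative ring $S/(S\cap Nil(R))$, where $\bar a^2=\bar\varepsilon$ is an idempotent with $\bar\varepsilon^2=\bar\varepsilon^4=\bar\varepsilon$; I would argue that $\bar a=\bar\varepsilon$ there, perhaps replacing $\varepsilon$ by the refined square-idempotent $\varepsilon a$ or $\varepsilon^2 a$ if needed so that the clean decomposition of $a$ itself (not $a^2$) comes out right.

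The main obstacle I anticipate is precisely the reduction from a decomposition of $a^2$ to one of $a$: knowing $a^2$ is strongly square-nil clean does not formally give that $a$ is, so one must either (i) handle the case $2\notin U(R)$ separately — e.g. when $2$ is a non-unit one can use that $2\in Nil$ or $2\in J$ type arguments, or pass to $R/Nil(R)$ which is reduced and where $a^4=a^2$ forces strong properties — or (ii) find the square-idempotent directly as a polynomial in $a$. A cleaner route may be to prove first that $J(R)$ is nil (using that non-units include all of $J(R)$, and for $j\in J(R)$, $j^4-j^2\in Nil(R)$ together with $1-j$ a unit forces $j$ nilpotent-like behavior), then pass to $\bar R=R/J(R)$, show $\bar R$ has the property that every non-unit $\bar a$ satisfies $\bar a^4=\bar a^2$, and in that setting (where one can often assume the ring is a product of matrix rings over division rings, or at least is "nice") produce the idempotent and lift it back through the nil radical. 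I would expect the write-up to split along whether $2$ is invertible, and the genuinely delicate computation will be verifying that the square-idempotent built from $\alpha(a^2)$, after possible adjustment, differs from $a$ by a nilpotent that commutes with it.
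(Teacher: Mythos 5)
Your forward direction is fine and is essentially the paper's argument. The converse, however, has a genuine gap at both of the places you flag. First, the detour through $b=a^2$ is not merely ``delicate'' --- it cannot be closed as written. Knowing that $\varepsilon=\alpha(a^2)$ is a tripotent with $a^2-\varepsilon\in Nil(R)$ does not determine $a$ modulo $Nil(R)$: in $\mathbb{Z}_3\times\mathbb{Z}_3$ with $a=(-1,0)$ one gets $\varepsilon=(1,0)$ and $a-\varepsilon=(1,0)\notin Nil(R)$, so $\bar a\neq\bar\varepsilon$ in general; and your proposed repairs $\varepsilon a$, $\varepsilon^2a$ only satisfy $e^4-e^2=\varepsilon^2(a^4-a^2)\in Nil(R)$, not $e^4=e^2$ on the nose, so they need not be square-idempotents. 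The detour is also unnecessary, and this is the missing first step: $a^4-a^2\in Nil(R)$ already forces $a^3-a\in Nil(R)$, because inside the commutative subring $\mathbb{Z}[a]$ one has $(a^3-a)^{2k}=(a^4-a^2)^k(a^2-1)^k$. The paper then (for $2\in U(R)$) applies Zhou's lemma directly to $a$, obtaining a polynomial tripotent $\alpha(a)$ with $a-\alpha(a)\in Nil(R)$; a tripotent is automatically a square-idempotent, and being a polynomial in $a$ it commutes with the nilpotent part. That dissolves your ``main obstacle'' entirely.

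Second, for $2\notin U(R)$ you only list candidate strategies and carry none of them out, yet this is where the real content of the converse lies, since Zhou's lemma requires $2$ invertible. The paper's route is to apply the hypothesis to the non-unit $2$ itself, getting $2^4-2^2=12\in Nil(R)$ and hence $6\in Nil(R)$, and then to combine this with $a^3-a\in Nil(R)$ through polynomial congruences in $\mathbb{Z}[a]$. A word of caution if you pursue this: the paper's own write-up of this case concludes the intermediate claim $a^2-a\in Nil(R)$ for every non-unit $a$, which is too strong --- it already fails for $a=2$ in $\mathbb{Z}_6$, a strongly NUS-nil clean ring with $2\notin U(\mathbb{Z}_6)$ and $Nil(\mathbb{Z}_6)=0$ --- so the safer implementation is to use $6\in Nil(R)$ to split $R\cong R_1\times R_2$ with $2\in Nil(R_1)$ and $3\in Nil(R_2)$ and treat the factors separately (as the paper itself does in its Lemma on the case $2\notin U(R)$), rather than trying to extract an honest idempotent from $a$. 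As it stands, your proposal establishes neither subcase of the converse.
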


\begin{proof} Assume that $R$ is strongly NUS-nil clean. Then, for all $a\not\in U(R)$, there are $e^4=e^2\in R$ and $n\in Nil(R)$ such that $a=e+n$ and $en=ne$. Thus, $a^2=e^2+n(2e+n)$. So, $$a^4=e^2+n(2e+n)(2e^2+n(2e+n)).$$ Therefore, clearly $a^4-a^2\in Nil(R)$, as desired.

Conversely, assume that $a^4-a^2\in Nil(R)$ for every $a\not\in U(R)$. It follows that $a^3-a\in Nil(R)$ for each $a\not\in U(R)$. If, for a moment, $2\in U(R)$, then the aforementioned \cite[Lemma 2.6]{zhou} applies to get that there exists $\alpha (a)\in \mathbb{Z}$ such that $\alpha (a)^3=\alpha(a)$ and $a-\alpha(a)\in Nil(R)$. So, there exists $m\in Nil(R)$ such that $a=\alpha(a)+m$. It is now easy to see that $\alpha(a)^4=\alpha(a)^2$ and $m\alpha(a)=\alpha(a)$, so that we are done.\\

Next, if $2\not\in U(R)$, then $2=f+q$, where $f^2=f^4$ and $\in Nil(R)$ with $qf=fq$. It thus follows that $$2^4-2^2=q(2f+q)(f^2+(2f+q)q-1)\in Nil(R).$$ So, $12=3\times 2\times 2\in Nil(R)$ which insures that $6\in Nil(R)$. On the other hand, since, for every $a\not\in Nil(R)$, $a^3-a\in Nil(R)$, we get $(a^3-a)^6\in Nil(R)$ which forces that $a^{18}-a^6\in Nil(R)$. Thus, $a^{13}-a \in Nil(R)$. From this and $a^3-a\in Nil(R)$, we derive $a^{13}-a^3\in Nil(R)$ and hence $a^{11}-a\in Nil(R)$. From this and $a^3-a\in Nil(R)$, we deduce $a^{11}-a^3\in Nil(R)$ whence $a^9-a\in Nil(R)$. As $a^3-a\in Nil(R)$, ut must be that $a^9-a^3\in Nil(R)$. It, therefore, follows that $a^7-a\in Nil(R)$ and so $a(a^6-1)\in Nil(R)$. But, since $6\in Nil(R)$, we obtain that $a(a-1)^6\in Nil(R)$ enabling us that $a(a-1)\in Nil(R)$. Furthermore, again owing to \cite[Lemma 2.6]{zhou}, there exists $\beta(a)\in \mathbb{Z}[a]$ such that $\beta (a)^2=\beta(a)$ and $a-\beta(a)\in Nil(R)$. Finally, it follows that $a=\beta(a)+k$, where $k\in Nil(R)$ and $k\beta(a)=\beta(a)k$, as wanted.
\end{proof}

A ring $R$ is known to be {\it strongly $\pi$-regular}, provided that, for any $a \in R$, there exists $n \in \mathbb{N}$ such that $a^n \in a^{n+1}R$.

\medskip

We now come to our critical statement.

\begin{proposition}\label{2.12}
Every strongly NUS-nil clean ring is strongly $\pi$-regular.
\end{proposition}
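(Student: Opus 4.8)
The plan is to split the argument according to whether the given element $a$ is a unit or not, using the characterization in Theorem \ref{7}. If $a \notin U(R)$, then $a^4 - a^2 \in Nil(R)$, so there is some $k \in \mathbb{N}$ with $(a^4-a^2)^k = 0$; expanding $a^{2k}(a^2-1)^k = 0$ and pushing the power of $a$ as far right as possible shows $a^{2k} \in a^{2k+1}R$ (indeed $a^{2k}$ is a $\Z$-linear combination of $a^{2k+1}, a^{2k+2}, \dots, a^{4k}$), which is exactly strong $\pi$-regularity at $a$. The remaining case is $a \in U(R)$: here it is immediate, since $a$ is invertible gives $a = a^2 a^{-1} \in a^2 R$, so $a^1 \in a^{1+1}R$ with $n=1$. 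Strong $\pi$-regularity only requires the condition $a^n \in a^{n+1}R$ for \emph{some} $n$, so handling these two cases covers every element of $R$.

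First I would record the easy unit case in one line. Then for $a \notin U(R)$ I would invoke Theorem \ref{7} to obtain $k$ with $(a^4 - a^2)^k = 0$, rewrite this as $a^{2k}(1-a^2)^k = 0$, and expand $(1-a^2)^k$ by the binomial theorem to get
\[
a^{2k} = a^{2k}\bigl(1-(1-a^2)^k\bigr) = a^{2k}\sum_{j=1}^{k}\binom{k}{j}(-1)^{j+1}a^{2j} \in a^{2k+1}R,
\]
since every term on the right is divisible by $a^{2k+2}$, hence certainly lies in $a^{2k+1}R$. Taking $n = 2k$ then gives $a^n \in a^{n+1}R$, as required. (Note we use that $a$ commutes with $1-a^2$, so the rearrangement of factors is legitimate.)

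There is essentially no obstacle here — the statement is a direct corollary of Theorem \ref{7} together with the elementary observation that a polynomial identity of the form $a^m(a^2-1)^k = 0$ forces $a^m \in a^{m+1}R$. The only point requiring a word of care is that strong $\pi$-regularity is a one-sided definition as stated (existence of $n$ with $a^n \in a^{n+1}R$), so we do not need to produce an idempotent or verify the two-sided/Fitting version; it suffices to exhibit the single containment above, and for units the trivial choice $n=1$ works.
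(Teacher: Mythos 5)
Your proof is correct and follows essentially the same route as the paper: both split into the unit and non-unit cases, invoke Theorem~\ref{7} to get $(a^4-a^2)^k=0$ for a non-unit $a$, and deduce $a^{2k}\in a^{2k+1}R$. You merely spell out the binomial-expansion step that the paper compresses into ``it follows at once.''
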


\begin{proof}
Let $R$ be a strongly NUS-nil clean ring and $a \in R$. If $a \in U(R)$, then $a$ is obviously a strongly $\pi$-regular element. If, however, $a \notin U(R)$, then, in virtue of Theorem \ref{7}, there exists $m \in \mathbb{N}$ such that $(a^4-a^2)^m = 0$. It, thereby, follows at once that $a^{2m} = a^{2m+1}r$ for some $r \in R$. Consequently, $R$ is a strongly $\pi$-regular ring, as expected.
\end{proof}	

We next continue with further crucial claims.

\begin{lemma}\label{2.14}
Let $R$ be a ring and $0\neq e=e^2\in R$. If $R$ is strongly NUS-nil clean, then so is the corner subring $eRe$.
\end{lemma}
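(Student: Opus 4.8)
The plan is to use the characterization from Theorem~\ref{7}: it suffices to show that $b^4 - b^2 \in Nil(eRe)$ for every $b \in eRe$ that is not a unit of $eRe$. Note that $Nil(eRe) = Nil(R) \cap eRe$, since nilpotency is an absolute notion. So first I would take $b \in eRe$ with $b \notin U(eRe)$, the identity of $eRe$ being $e$.

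Next I would pass to the full ring $R$ by considering the element $a = b + (1-e) \in R$. The key observation is that $a$ is not a unit in $R$: indeed, if $a$ were invertible in $R$, then multiplying $au = 1 = ua$ on both sides by $e$ and using $eb = b = be$ and $e(1-e) = 0$ would produce a two-sided inverse $eue$ for $b$ inside $eRe$, contradicting $b \notin U(eRe)$. Hence, since $R$ is strongly NUS-nil clean, Theorem~\ref{7} gives $a^4 - a^2 \in Nil(R)$.

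Then I would compute $a^4 - a^2$ using the orthogonality of $e$ and $1-e$: because $b \in eRe$ commutes with $1-e$ (in fact $b(1-e) = 0 = (1-e)b$) and $(1-e)$ is an idempotent, we get $a^k = b^k + (1-e)$ for every $k \geq 1$, so $a^4 - a^2 = b^4 - b^2$. Therefore $b^4 - b^2 \in Nil(R)$; but $b^4 - b^2 \in eRe$ as well, so $b^4 - b^2 \in Nil(R) \cap eRe = Nil(eRe)$. Applying Theorem~\ref{7} in the other direction to the ring $eRe$ then yields that $eRe$ is strongly NUS-nil clean.

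The main thing to be careful about is the non-unit transfer: one must verify cleanly that $b \notin U(eRe)$ forces $a = b + (1-e) \notin U(R)$, since Theorem~\ref{7} only gives information about genuine non-units of $R$. Everything else is a short direct computation with orthogonal idempotents, and the identification $Nil(eRe) = Nil(R)\cap eRe$ is immediate. No case split on whether $2$ is a unit is needed here because Theorem~\ref{7} has already absorbed that difficulty.
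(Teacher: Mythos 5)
Your proof is correct and follows essentially the same route as the paper: both reduce the claim to the characterization of Theorem~\ref{7} together with the identification ${\rm Nil}(eRe)={\rm Nil}(R)\cap eRe$, and both hinge on transferring non-invertibility from $eRe$ to $R$ via the observation that a unit inverse $u$ in $R$ would give $eue$ as an inverse in $eRe$. The only difference is that the paper applies Theorem~\ref{7} to $b$ itself (which is already a non-unit of $R$ by the very same $eue$ argument), so your detour through $a=b+(1-e)$ and the computation $a^k=b^k+(1-e)$ is sound but not needed.
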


\begin{proof}
Let $a \in eRe$ such that $a\not\in U(eRe)$. We write $a = ea = ae = eae$. If $a \in U(R)$, then there exists $b \in R$ such that $ab = ba = 1$, which assures $a(ebe) = (ebe)a = e$, a contradiction. Therefore, $a \notin U(R)$. Hence, according to Proposition \ref{7}, we have $$a^4- a^2 \in {\rm Nil}(R) \cap eRe \subseteq {\rm Nil}(eRe),$$ as promised.
\end{proof}

\begin{lemma}\label{8}
Let $R$ be a strongly NUS-nil clean ring. Then, $J(R)$ is a nil-ideal of $R$.
\end{lemma}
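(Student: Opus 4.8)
The plan is to show every element of $J(R)$ is nilpotent. Fix $a\in J(R)$; then in particular $a\notin U(R)$, since otherwise $1-a\cdot a^{-1}=0$ would be a unit, contradicting $a\in J(R)$ (more directly, $J(R)$ contains no units). By Theorem \ref{7} we get $a^4-a^2\in Nil(R)$, so there exists $m\in\mathbb{N}$ with $(a^4-a^2)^m=0$, i.e. $a^{2m}(a^2-1)^m=0$. The key observation is that $a\in J(R)$ forces $a^2-1\in U(R)$: indeed $1-a^2=(1-a)(1+a)$ and both factors are units because $J(R)$ is a (two-sided) ideal with $1+J(R)\subseteq U(R)$. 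Hence $(a^2-1)^m\in U(R)$, and multiplying $a^{2m}(a^2-1)^m=0$ on the right by $((a^2-1)^m)^{-1}$ yields $a^{2m}=0$. Therefore $a\in Nil(R)$.

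So every element of the ideal $J(R)$ is nilpotent, which is exactly the assertion that $J(R)$ is a nil-ideal. The only mild subtlety worth spelling out is the commutation needed to pass from $(a^4-a^2)^m=0$ to $a^{2m}(a^2-1)^m=0$: since $a^4-a^2=a^2(a^2-1)$ and $a^2$ commutes with $a^2-1$, the factorization of the power is legitimate without any hypothesis beyond these two elements commuting with each other, which they obviously do.

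The step I expect to be the genuine (though still light) content is invoking Theorem \ref{7} correctly: it applies to $a\notin U(R)$, and elements of $J(R)$ are automatically non-units, so the hypothesis is met. Everything after that is the standard trick that in any ring a nilpotent element times a unit (appropriately placed, using that $a^2$ and $a^2-1$ commute) can only vanish if the nilpotent part already vanishes. No obstacle of substance arises; the proof is short.
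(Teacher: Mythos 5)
Your proposal is correct and follows essentially the same route as the paper: non-units of $J(R)$ satisfy $a^4-a^2\in \mathrm{Nil}(R)$ by Theorem~\ref{7}, one factors $(a^4-a^2)^m = a^{2m}(a^2-1)^m$, and cancels the unit $(a^2-1)^m$ to get $a^{2m}=0$. The extra details you supply (why $a^2-1$ is a unit via $(1-a)(1+a)$ and the commutation justifying the factorization) are exactly the points the paper leaves implicit.
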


\begin{proof} Let $j\in J(R)$. Thus, $j\not\in U(R)$. Now, using Lemma \ref{7}, we detect $j^4-j^2\in Nil(R)$. It apparently follows that $j^2(j^2-1)\in Nil(R)$. So, $j^{2t}(j^2-1)^t=0$ for some positive integer $t$. However, as $j^2-1\in U(R)$, we get $j^{2t}=0$ and so $j\in Nil(R)$, as pursued.
\end{proof}

\begin{proposition}\label{2.13}
Let $R$ be a ring. The following hold:	
\par
(i) For any nil-ideal $I \subseteq R$, $R$ is strongly NUS-nil clean if, and only if, $R/I$ is strongly NUS-nil clean.
\par
(ii) A ring $R$ is strongly NUS-nil clean if, and only if, $J(R)$ is nil and $R/J(R)$ is strongly NUS-nil clean.
\end{proposition}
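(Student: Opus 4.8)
The plan is to deduce both parts from the characterization in Theorem \ref{7}, namely that $R$ is strongly NUS-nil clean exactly when $a^4-a^2\in \mathrm{Nil}(R)$ for every $a\notin U(R)$. For part (i), fix a nil-ideal $I\subseteq R$ and write $\pi\colon R\to R/I$ for the quotient map. The key preliminary observation is the standard fact that, since $I$ is nil, $u\in U(R)$ if and only if $\pi(u)\in U(R/I)$; equivalently, $a\notin U(R)\iff \pi(a)\notin U(R/I)$. Combined with the fact that $\pi^{-1}(\mathrm{Nil}(R/I))=\mathrm{Nil}(R)$ (again because $I$ is nil), the condition ``$a^4-a^2\in\mathrm{Nil}(R)$ for all $a\notin U(R)$'' transfers verbatim across $\pi$ in both directions: if $b=\pi(a)\notin U(R/I)$ then $a\notin U(R)$, so $a^4-a^2\in\mathrm{Nil}(R)$, hence $b^4-b^2=\pi(a^4-a^2)\in\mathrm{Nil}(R/I)$; conversely, if $a\notin U(R)$ then $\pi(a)\notin U(R/I)$, so $\pi(a^4-a^2)\in\mathrm{Nil}(R/I)$, whence $a^4-a^2\in\pi^{-1}(\mathrm{Nil}(R/I))=\mathrm{Nil}(R)$. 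Applying Theorem \ref{7} at both ends gives the equivalence.

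For part (ii), the forward direction is immediate: if $R$ is strongly NUS-nil clean, then $J(R)$ is nil by Lemma \ref{8}, and $R/J(R)$ is strongly NUS-nil clean by applying part (i) with the nil-ideal $I=J(R)$. For the reverse direction, assume $J(R)$ is nil and $R/J(R)$ is strongly NUS-nil clean; then part (i), applied again with $I=J(R)$, yields that $R$ is strongly NUS-nil clean. So (ii) is just a specialization of (i) together with Lemma \ref{8}.

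The only point requiring a little care — and the natural candidate for ``the main obstacle'' — is the lifting-of-units claim: that $I$ nil forces $a\notin U(R)\iff \pi(a)\notin U(R/I)$. One direction is trivial ($\pi$ is a ring homomorphism, so it sends units to units). For the other, if $\pi(a)$ is a unit in $R/I$, pick $b\in R$ with $ab-1,\,ba-1\in I$; since $I$ is nil, $ab=1+z$ with $z$ nilpotent, so $ab$ is a unit in $R$, and likewise $ba$ is a unit, which forces $a$ to be a unit (it has a left and a right inverse). This is entirely standard, so in the write-up I would either cite it or dispatch it in one line. Everything else is a mechanical application of Theorem \ref{7} and Lemma \ref{8}, with no genuine difficulty.
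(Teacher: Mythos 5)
Your proposal is correct and follows essentially the same route as the paper: both directions of (i) are obtained by transferring the condition of Theorem \ref{7} across the quotient map, using that units and nilpotents lift modulo a nil ideal, and (ii) is then Lemma \ref{8} plus (i) applied to $I=J(R)$. The only difference is that you make explicit the lifting-of-units step that the paper leaves implicit; that is a welcome clarification, not a divergence.
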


\begin{proof}
(i) Suppose $R$ is a strongly NUS-nil clean ring and put $\overline{R} := R/I$. If $\bar{a} \notin U(\overline{R})$, then $a \notin U(R)$, which ensures with the aid of Theorem \ref{7} that $a^4 - a^2 \in {\rm Nil}(R)$, so that $\bar{a}^4- \bar{a}^2 \in {\rm Nil}(\overline{R})$.

Conversely, suppose $\overline{R}$ is a strongly NUS-nil clean ring. If $a \notin U(R)$, then $\bar{a} \notin U(\overline{R})$, and thus Theorem \ref{7} guarantees that $\bar{a}^4- \bar{a}^2 \in {\rm Nil}(\overline{R})$. Therefore, there exists $k \in \mathbb{N}$ such that $(a^4- a^2)^k \in I \subseteq {\rm Nil}(R)$.

(ii) Utilizing Lemma \ref{8} and part (i) of the proof, the arguments are complete.
\end{proof}

\begin{corollary}\label{10}
Let $I$ be an ideal of a ring $R$. Then, the following are equivalent:
\begin{enumerate}
\item
$R/I$ is strongly NUS-nil clean.
\item
$R/I^n$	is strongly NUS-nil clean for all $n \in \mathbb{N}$.
\item
$R/I^n$ is strongly NUS-nil clean for some $n \in \mathbb{N}$.
\end{enumerate}	
\end{corollary}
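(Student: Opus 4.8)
The plan is to reduce everything to Proposition \ref{2.13}(i) via the third isomorphism theorem, fixing an arbitrary $n\in\mathbb{N}$ and comparing $R/I^{n}$ with $R/I$ directly. First I would dispose of the trivial parts: the implication $(2)\Rightarrow(3)$ is immediate, and since $I^{1}=I$, the ring $R/I$ appears as the $n=1$ instance in both $(2)$ and $(3)$; so it suffices to prove that, for \emph{each} fixed $n\geq 1$, the ring $R/I^{n}$ is strongly NUS-nil clean if, and only if, $R/I$ is strongly NUS-nil clean. This single equivalence yields $(1)\Rightarrow(2)$ (apply it for every $n$), $(2)\Rightarrow(3)$ (trivially), and $(3)\Rightarrow(1)$ (apply it for the given $n$).

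For the core equivalence, set $S:=R/I^{n}$ and consider the ideal $\bar I:=I/I^{n}$ of $S$ (this makes sense because $I^{n}\subseteq I$). The key observation is that $\bar I$ is a \emph{nilpotent} ideal of $S$: indeed, the $n$-fold product satisfies
$$\bar I^{\,n}=(I^{n}+I^{n})/I^{n}=I^{n}/I^{n}=\bar 0 \ \text{ in } S,$$
so in particular $\bar I$ is a nil-ideal of $S$. Hence Proposition \ref{2.13}(i), applied to the ring $S$ and its nil-ideal $\bar I$, tells us that $S$ is strongly NUS-nil clean if, and only if, $S/\bar I$ is strongly NUS-nil clean.

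Finally, the third isomorphism theorem identifies $S/\bar I=(R/I^{n})/(I/I^{n})\cong R/I$, and strong NUS-nil cleanness is preserved under ring isomorphism. Combining this with the previous paragraph gives: $R/I^{n}$ is strongly NUS-nil clean $\iff$ $R/I$ is strongly NUS-nil clean, which is exactly the claimed equivalence, completing the proof.

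I do not expect a genuine obstacle here; the only points requiring a little care are checking that $\bar I$ is well defined as an ideal of $R/I^{n}$ (which uses $I^{n}\subseteq I$) and verifying the nilpotency computation $\bar I^{\,n}=\bar 0$, after which Proposition \ref{2.13}(i) does all the work.
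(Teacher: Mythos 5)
Your proposal is correct and follows essentially the same route as the paper: both arguments hinge on the isomorphism $(R/I^{n})/(I/I^{n})\cong R/I$ together with the observation that $I/I^{n}$ is a nil (indeed nilpotent) ideal of $R/I^{n}$, so that Proposition \ref{2.13}(i) transfers the property in both directions. You merely make explicit the nilpotency computation $\bar I^{\,n}=\bar 0$ and package the three implications as a single fixed-$n$ equivalence, which is a clean but not substantively different presentation.
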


\begin{proof}
(i) $\Longrightarrow$ (ii). For any $n \in \mathbb{N}$, we have $\dfrac{R/I^n}{I/I^n} \cong R/I$. Since $I/I^n$ is a nil-ideal of $R/I^n$ and $R/I$ is strongly NUS-nil clean, then Proposition \ref {2.13} gives that $R/I^n$ is a strongly NUS-nil clean ring.\\
(ii) $\Longrightarrow$ (iii). This implication is trivial.\\
(iii) $\Longrightarrow$ (i). For any ideal $I$ of $R$, we have $\dfrac{R/I^n}{I/I^n} \cong R/I$, so that, via Proposition \ref{2.13}, we can conclude that $R/I$ is strongly NUS-nil clean, as stated.
\end{proof}

\begin{proposition}\label{2.9}
Let $R$ be a ring. Then, the following are equivalent:
\begin{enumerate}
\item
$R$ is strongly square nil clean.
\item
${\rm T}_{n}(R)$ is strongly NUS-nil clean for all $n \in \mathbb{N}$.
\item
${\rm T}_n(R)$ is strongly NUS-nil clean for some $n \geq 2$.
\end{enumerate}
\end{proposition}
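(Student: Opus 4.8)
The plan is to reduce everything to Theorem~\ref{7} and to the fact, established in Lemma~\ref{8}, that the Jacobson radical of a strongly NUS-nil clean ring is nil. The chain of implications (i) $\Rightarrow$ (ii) $\Rightarrow$ (iii) $\Rightarrow$ (i) is the natural route. For (i) $\Rightarrow$ (ii), I would use the standard nil-ideal ${\rm N}_n(R)$ of strictly upper triangular matrices in ${\rm T}_n(R)$, which is nilpotent and satisfies ${\rm T}_n(R)/{\rm N}_n(R)\cong R^n$. Since $R$ is strongly square-nil clean, Lemma~\ref{2.4} (really its easy converse direction) shows $R^n$ is strongly NUS-nil clean, and then Proposition~\ref{2.13}(i) lifts this through the nilpotent ideal ${\rm N}_n(R)$ to give that ${\rm T}_n(R)$ is strongly NUS-nil clean.

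The implication (ii) $\Rightarrow$ (iii) is immediate, taking $n=2$ (or any $n\ge 2$). The real content is (iii) $\Rightarrow$ (i). Assume ${\rm T}_n(R)$ is strongly NUS-nil clean for some fixed $n\ge 2$. First I would pass to a corner: the matrix unit $e_{11}\in {\rm T}_n(R)$ is an idempotent with $e_{11}{\rm T}_n(R)e_{11}\cong R$, so by Lemma~\ref{2.14} the corner ring $R$ is strongly NUS-nil clean. It remains to upgrade ``strongly NUS-nil clean'' to ``strongly square-nil clean'' for $R$, i.e.\ to handle the units of $R$. Let $u\in U(R)$ and consider the element $A=\mathrm{diag}(u,0,\ldots,0)+\text{(strictly upper part)}$; more precisely take $A$ to be the matrix with $u$ in the $(1,1)$ slot, $0$ on the rest of the diagonal, and (say) a chosen entry making $A$ non-invertible in ${\rm T}_n(R)$ — for instance $A=u e_{11}$, which is not a unit of ${\rm T}_n(R)$ since it is not even a unit modulo ${\rm N}_n(R)$ (its image in $R^n$ has zero components). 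Applying Theorem~\ref{7} to $A$ in ${\rm T}_n(R)$ gives $A^4-A^2\in {\rm Nil}({\rm T}_n(R))$; reading off the $(1,1)$-entry yields $u^4-u^2\in {\rm Nil}(R)$. Hence $u^2(u^2-1)$ is nilpotent, and since $u^2\in U(R)$ we get $u^2-1\in{\rm Nil}(R)$, so $u^2=1+q$ with $q$ nilpotent and central-ish enough to commute with $u$; then $1$ is a square-idempotent, $u-1$ need not be nilpotent, so instead I would argue: $(u-1)(u+1)=u^2-1\in{\rm Nil}(R)$, and one of the two factors becomes the nilpotent part after absorbing the other into a square-idempotent. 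Concretely, since $u^4=u^2$ modulo nilpotents and $u$ is a unit, $u^2$ is ``square-nil clean with square-idempotent $1$'', i.e.\ $u^2-1$ is nilpotent; writing $u = (u^{-1}\cdot u^2) = u^{-1}(1+q)$ and noting $u^{-1}$ is a unit congruent to $u$ modulo nilpotents lets one conclude $u$ itself is strongly square-nil clean with square-idempotent equal to a suitable power of $u$ (which is a square-idempotent because $u^4\equiv u^2$), and nilpotent part commuting with it.

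The step I expect to be the main obstacle is exactly this last upgrade: turning the congruence $u^4-u^2\in{\rm Nil}(R)$ for a unit $u$ into an \emph{explicit} strongly square-nil clean decomposition $u=e+k$ with $e^4=e^2$, $k\in{\rm Nil}(R)$, $ek=ke$. The cleanest way is probably to invoke \cite[Lemma 2.6]{zhou}: from $u^2-1\in {\rm Nil}(R)$ one has $(u^2)^2-u^2\in{\rm Nil}(R)$, hence (when $2\in U(R)$, or after the same $6\in{\rm Nil}(R)$ bootstrap used in the proof of Theorem~\ref{7}) there is a polynomial $\beta\in\mathbb{Z}[u]$ with $\beta(u)^2=\beta(u)$, $u^2-\beta(u)\in{\rm Nil}(R)$; but I actually need a square-idempotent mod nilpotents close to $u$, not to $u^2$. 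A more direct fix: set $e=u^{-2}$... no — better, observe $u$ itself satisfies $u^4-u^2\in{\rm Nil}(R)$, so $u$ is a root (mod nilpotents) of $t^2(t^2-1)$; since $u$ is a unit it is a root of $t^2-1$, so $u^2\equiv 1$ and then $u$ is a square root of $1$ modulo nilpotents, whence $e:=u$ already satisfies $e^4=u^4\equiv u^2=e^2 \pmod{{\rm Nil}(R)}$, and lifting the idempotent-like identity $e^4=e^2$ through the nil ideal ${\rm Nil}(R)$ (which is fine since ${\rm Nil}(R)$ is a nil ideal here, using Lemma~\ref{8} applied to ${\rm T}_n(R)$ or to $R$) produces a genuine square-idempotent commuting with $u$ and differing from $u$ by a nilpotent. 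Once that is in place, Lemma~\ref{2.4} closes the loop and completes (iii) $\Rightarrow$ (i).
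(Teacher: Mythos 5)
Your implications (i) $\Rightarrow$ (ii) $\Rightarrow$ (iii) are fine and essentially match the paper's (the paper factors ${\rm T}_n(R)$ by its Jacobson radical rather than by the strictly upper triangular ideal, but the mechanism --- Lemma \ref{2.4} plus Proposition \ref{2.13} --- is the same). The problem is the last step of (iii) $\Rightarrow$ (i). You correctly reduce to showing that each unit $u$ of $R$ is strongly square-nil clean, and you correctly extract $u^4-u^2\in{\rm Nil}(R)$, hence $u^2-1\in{\rm Nil}(R)$, from the non-unit matrix $ue_{11}$. But the passage from this congruence to an actual decomposition $u=e+k$ with $e^4=e^2$, $k\in{\rm Nil}(R)$ and $ek=ke$ is exactly what you do not prove. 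Taking $e:=u$ gives $e^4=e^2$ only modulo nilpotents, and your proposed repair --- ``lifting the identity $e^4=e^2$ through the nil ideal ${\rm Nil}(R)$'' --- is unjustified on two counts: ${\rm Nil}(R)$ is not known to be an ideal in a noncommutative ring (Lemma \ref{8} only says that $J(R)$ is nil, not that ${\rm Nil}(R)$ is an ideal), and even over a genuine nil ideal, producing a square-idempotent that commutes with $u$ and differs from it by a nilpotent is precisely the nontrivial content of \cite[Lemma 2.6]{zhou}, which requires the case split $2\in U(R)$ versus $6\in{\rm Nil}(R)$ and a real argument (compare the converse direction of the proof of Theorem \ref{7}). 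As written, the key step is asserted, not proved.

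The gap is avoidable, and the paper's proof shows how: do not pass through Theorem \ref{7} at all. For an \emph{arbitrary} $a\in R$ (unit or not), the matrix $A={\rm diag}(a,0,\dots,0)$ is a non-unit of ${\rm T}_n(R)$, so hypothesis (iii) hands you directly a decomposition $A=E+Q$ with $E^4=E^2$, $Q$ nilpotent and $EQ=QE$; comparing $(1,1)$-entries (which multiply independently in a triangular matrix ring) yields $a=e_{11}+q_{11}$ with $e_{11}^4=e_{11}^2$, $q_{11}$ nilpotent and $e_{11}q_{11}=q_{11}e_{11}$, i.e.\ a strongly square-nil clean decomposition of $a$ itself, with no lifting and no separate treatment of units. (Alternatively, the two facts you did establish --- $R$ is strongly NUS-nil clean and $u^2-1\in{\rm Nil}(R)$ for every unit $u$ --- are exactly condition (ii) of Lemma \ref{2.30}, so you could close the argument by citing that equivalence instead of improvising a lifting; but the direct matrix argument is cleaner and self-contained.)
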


\begin{proof}
(ii) $\Rightarrow$ (iii). This is trivial.\\
(iii) $\Rightarrow$ (i). Let $a\in R$. Then, it must be that
$$A=\begin{pmatrix}
	a & 0 \\
	0& 0 \\
\end{pmatrix}\in {\rm T}_2(R).$$ It is evident that $A$ is non-invertible in ${\rm T}_2(R)$. By hypothesis, we can find a square idempotent
$E=\begin{pmatrix}
	e_{11} &e_{12} \\
	0& e_{22}  \\
\end{pmatrix}$ and a nilpotent
$Q=\begin{pmatrix}
	q_{11} &q_{12} \\
	0& q_{22}  \\
\end{pmatrix}$ such that
$A=E+Q$ and $EQ=QE$. It now follows by plain inspection that $a=e_{11}+q_{11}$ and $e_{11}q_{11}=q_{11}e_{11}$. Hence, $e_{11}^4=e_{11}^2$, and $q_{11}$ is a nilpotent in $R$. Thus, $a$ has strongly NUS-nil clean decomposition. Therefore, point (i) is valid.\\
(i) $\Rightarrow$ (ii). It suffices with Lemma \ref{10} at hand to prove only that the quotient $T_n(R)/J(T_n(R))$ is strongly NUS-nil clean. To this target, observe that $$T_n(R)/J(T_n(R))\cong \sqcap_{i=1}^nR/J(R).$$ So, we need just to show that $\sqcap_{i=1}^nR/J(R)$ is strongly NUS-nil clean. In fact, Lemma \ref{2.4} is a guarantor that $\sqcap_{i=1}^nR/J(R)$ is strongly NUS-nil clean if, and only if, $R/J(R)$ is strongly square-nil clean. It is now easy to see that, if $R$ is strongly square-nil clean, then $R/J(R)$ is strongly-square nil clean, as asked for.
\end{proof}

Let $R$ be a ring and $M$ a bi-module over $R$. The {\it trivial extension} of $R$ and $M$ is stated as
\[ T(R, M) = \{(r, m) : r \in R \text{ and } m \in M\}, \]
with addition defined component-wise and multiplication defined by
\[ (r, m)(s, n) = (rs, rn + ms). \]

\medskip

The next two consequences arrived quite naturally.

\begin{corollary}\label{2.17}
Let $R$ be a ring and $M$ a bi-module over $R$. Then, the following statements are equivalent:
\begin{enumerate}
\item
$T(R, M)$ is a strongly NUS-nil clean ring.
\item
$R$ is a strongly NUS-nil clean ring.
\end{enumerate}
\end{corollary}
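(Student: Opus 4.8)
The plan is to deduce this from Proposition~\ref{2.13}, exactly as in the trivial-extension corollaries that are standard in this area; the key observation is that the set $\{(0,m): m\in M\}$ is a nil-ideal of $T(R,M)$ whose quotient is $R$. First I would verify that $I:=0\times M$ is indeed a two-sided ideal of $T(R,M)$ and that it is nil: from the multiplication rule $(0,m)(0,n)=(0,0)$, every element of $I$ squares to zero, so $I\subseteq \mathrm{Nil}(T(R,M))$. Next I would record the ring isomorphism $T(R,M)/I\cong R$, which is immediate from the component-wise addition and the formula $(r,m)(s,n)=(rs,rn+ms)$ reducing modulo $I$ to $(r,m)\mapsto r$.

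With those two facts in hand, the equivalence is just an application of Proposition~\ref{2.13}(i) with this particular nil-ideal: $T(R,M)$ is strongly NUS-nil clean if and only if $T(R,M)/I$ is, i.e.\ if and only if $R$ is. So the argument is essentially three lines once the ideal-theoretic setup is laid out.

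I do not expect a genuine obstacle here; the only point requiring a little care is confirming that $I$ really is nil (as opposed to merely nilpotent as an ideal), but since each generator squares to zero this is trivial — $I$ is even a square-zero ideal. One could alternatively phrase it as: $I\subseteq J(T(R,M))$ and $I$ is nil, then invoke Proposition~\ref{2.13}(ii), but going directly through part~(i) with $I$ as the nil-ideal is cleanest. I would therefore write the proof as: "Put $I=\{(0,m):m\in M\}$. Since $(0,m)(0,n)=(0,0)$ for all $m,n\in M$, $I$ is a nil (indeed square-zero) ideal of $T(R,M)$, and plainly $T(R,M)/I\cong R$. The claim now follows immediately from Proposition~\ref{2.13}(i)."
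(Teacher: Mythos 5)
Your proposal is correct and follows essentially the same route as the paper: both take $I=\{(0,m):m\in M\}$, note it is a nil (in fact square-zero) ideal with $T(R,M)/I\cong R$, and invoke Proposition~\ref{2.13}(i). Your write-up merely makes explicit the verification that $I$ is nil, which the paper leaves to the reader.
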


\begin{proof}
Set $A:={\rm T}(R, M)$ and consider $I:={\rm T}(0, M)$. It is not too hard to verify that $I$ is a nil-ideal of $A$ such that $\dfrac{A}{I} \cong R$. So, the result follows directly from Proposition \ref{2.13}.
\end{proof}

Let $\alpha$ be an endomorphism of $R$ and $n$ a positive integer. Consider the {\it skew triangular matrix ring}
$${\rm T}_{n}(R,\alpha )=\left\{ \left. \begin{pmatrix}
	a_{0} & a_{1} & a_{2} & \cdots & a_{n-1} \\
	0 & a_{0} & a_{1} & \cdots & a_{n-2} \\
	0 & 0 & a_{0} & \cdots & a_{n-3} \\
	\ddots & \ddots & \ddots & \vdots & \ddots \\
	0 & 0 & 0 & \cdots & a_{0}
\end{pmatrix} \right| a_{i}\in R \right\}$$
with addition point-wise and multiplication given by:
\begin{align*}
&\begin{pmatrix}
		a_{0} & a_{1} & a_{2} & \cdots & a_{n-1} \\
		0 & a_{0} & a_{1} & \cdots & a_{n-2} \\
		0 & 0 & a_{0} & \cdots & a_{n-3} \\
		\ddots & \ddots & \ddots & \vdots & \ddots \\
		0 & 0 & 0 & \cdots & a_{0}
	\end{pmatrix}\begin{pmatrix}
		b_{0} & b_{1} & b_{2} & \cdots & b_{n-1} \\
		0 & b_{0} & b_{1} & \cdots & b_{n-2} \\
		0 & 0 & b_{0} & \cdots & b_{n-3} \\
		\ddots & \ddots & \ddots & \vdots & \ddots \\
		0 & 0 & 0 & \cdots & b_{0}
	\end{pmatrix}  =\\
	& \begin{pmatrix}
		c_{0} & c_{1} & c_{2} & \cdots & c_{n-1} \\
		0 & c_{0} & c_{1} & \cdots & c_{n-2} \\
		0 & 0 & c_{0} & \cdots & c_{n-3} \\
		\ddots & \ddots & \ddots & \vdots & \ddots \\
		0 & 0 & 0 & \cdots & c_{0}
\end{pmatrix},
\end{align*}
where $$c_{i}=a_{0}\alpha^{0}(b_{i})+a_{1}\alpha^{1}(b_{i-1})+\cdots +a_{i}\alpha^{i}(b_{1}),~~ 1\leq i\leq n-1
.$$

\medskip

We, hereafter, denote the elements of ${\rm T}_{n}(R, \alpha)$ by $(a_{0},a_{1},\ldots , a_{n-1})$. If $\alpha $ is the identity endomorphism, then ${\rm T}_{n}(R,\alpha )$ is a subring of the upper triangular matrix ring ${\rm T}_{n}(R)$.

\begin{corollary}\label{2.20}
Let $R$ be a ring. Then, the following statements are equivalent:
\begin{enumerate}
\item
${\rm T}_{n}(R,\alpha)$ is a strongly NUS-nil clean ring.
\item
$R$ is a strongly NUS-nil clean ring.
\end{enumerate}
\end{corollary}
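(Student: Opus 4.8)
The plan is to mimic the structure of the proof of Corollary \ref{2.20}'s predecessors (Corollaries \ref{2.17} and \ref{2.9}), reducing the statement about the skew triangular matrix ring to the base ring $R$ via a nil-ideal quotient. First I would observe that the set $I$ of all matrices $(0,a_1,a_2,\ldots,a_{n-1})$ with zero diagonal is a two-sided ideal of ${\rm T}_n(R,\alpha)$, and that it is nilpotent: any product of $n$ such strictly-upper-triangular matrices vanishes (the multiplication formula for the $c_i$ only mixes entries, it does not spoil the strictly-upper-triangular shape, so $I^n = 0$). Hence $I$ is in particular a nil-ideal.

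Next I would identify the quotient ${\rm T}_n(R,\alpha)/I$ with $R$ via the diagonal map $(a_0,a_1,\ldots,a_{n-1}) \mapsto a_0$; one checks from the displayed multiplication rule that $c_0 = a_0 b_0$, so this is a ring homomorphism, clearly surjective with kernel exactly $I$. Therefore ${\rm T}_n(R,\alpha)/I \cong R$ as rings. With these two facts in hand, Proposition \ref{2.13}(i) applies verbatim: since $I$ is a nil-ideal of ${\rm T}_n(R,\alpha)$, the ring ${\rm T}_n(R,\alpha)$ is strongly NUS-nil clean if, and only if, ${\rm T}_n(R,\alpha)/I \cong R$ is strongly NUS-nil clean. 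This gives the equivalence of (i) and (ii) directly.

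The only genuinely substantive step is verifying that $I$ is a nilpotent (hence nil) ideal, and that the $\alpha$-twist in the multiplication does not interfere. This is where the skew structure could, in principle, cause trouble, but it does not: the formula $c_i = \sum_{j=0}^{i} a_j \alpha^j(b_{i-j})$ shows that if all the $a_0$ and $b_0$ diagonal entries are zero then $c_0 = 0$ and, more to the point, multiplying $k$ strictly-upper matrices shifts the first nonzero off-diagonal further right each time, so the product of $n$ of them is $0$ regardless of $\alpha$. I would write this out as a short lemma-style remark or simply inline it. Everything else is an immediate citation of Proposition \ref{2.13}, exactly as in the proof of Corollary \ref{2.17}. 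I would therefore expect the proof to read:

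\begin{proof}
Put $A := {\rm T}_n(R,\alpha)$ and let $I$ be the set of all elements $(0,a_1,\ldots,a_{n-1}) \in A$ with vanishing diagonal. From the multiplication rule, $c_0 = a_0 b_0$, so the map $A \to R$ sending $(a_0,a_1,\ldots,a_{n-1})$ to $a_0$ is a surjective ring homomorphism with kernel $I$; hence $A/I \cong R$. Moreover, the product of any $n$ elements of $I$ is zero, so $I$ is a nilpotent, and in particular nil, ideal of $A$. The claimed equivalence now follows at once from Proposition \ref{2.13}(i).
\end{proof}
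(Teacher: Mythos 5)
Your proposal is correct and follows essentially the same route as the paper: the paper also takes $I$ to be the ideal of elements with zero diagonal, notes $I^{n}=0$ and ${\rm T}_{n}(R,\alpha)/I\cong R$, and then invokes Proposition \ref{2.13}. Your write-up is in fact slightly more careful in identifying $I$ as the set of elements $(0,a_1,\ldots,a_{n-1})$ inside ${\rm T}_n(R,\alpha)$ and in checking that the $\alpha$-twist does not affect nilpotency, but the argument is the same.
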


\begin{proof}
Choose
$$I:=\left\{
\left.
\begin{pmatrix}
		0 & a_{12} & \ldots & a_{1n} \\
		0 & 0 & \ldots & a_{2n} \\
		\vdots & \vdots & \ddots & \vdots \\
		0 & 0 & \ldots & 0
	\end{pmatrix} \right| a_{ij}\in R \quad (i\leq j )
	\right\}.$$
Then, one easily checks that $I^{n}=0$ and $\dfrac{{\rm T}_{n}(R,\alpha )}{I} \cong R$. Consequently, Proposition \ref{2.13} employs to get the desired result.
\end{proof}

Furthermore, one mentions that Wang introduced in \cite{13} the matrix ring ${\rm S}_{n,m}(R)$ as follows: supposing $R$ is a ring, consider the matrix ring ${\rm S}_{n,m}(R)=$

$$\left\{ \begin{pmatrix}
   a & b_1 & \cdots & b_{n-1} & c_{1n} & \cdots & c_{1 n+m-1}\\
   \vdots  & \ddots & \ddots & \vdots & \vdots & \ddots & \vdots \\
   0 & \cdots & a & b_1 & c_{n-1,n} & \cdots & c_{n-1,n+m-1} \\
   0 & \cdots & 0 & a & d_1 & \cdots & d_{m-1} \\
   \vdots  & \ddots & \ddots & \vdots & \vdots & \ddots & \vdots \\
   0 & \cdots & 0 & 0  & \cdots & a & d_1 \\
   0 & \cdots & 0 & 0  & \cdots & 0 & a
\end{pmatrix}\in {\rm T}_{n+m-1}(R) : a, b_i, d_j,c_{i,j} \in R \right\}.$$

\noindent Likewise, let ${\rm T}_{n,m}(R)$ be

$$\left\{ \left(\begin{array}{@{}c|c@{}}
  \begin{matrix}
  a & b_1 & b_2 & \cdots & b_{n-1} \\
  0 & a & b_1 & \cdots & b_{n-2} \\
  0 & 0 & a & \cdots & b_{n-3} \\
  \vdots & \vdots & \vdots & \ddots & \vdots \\
  0 & 0 & 0 & \cdots & a
  \end{matrix}
  & \bigzero \\
\hline
  \bigzero &
  \begin{matrix}
  a & c_1 & c_2 & \cdots & c_{m-1} \\
  0 & a & c_1 & \cdots & c_{m-2} \\
  0 & 0 & a & \cdots & c_{m-3} \\
  \vdots & \vdots & \vdots & \ddots & \vdots \\
  0 & 0 & 0 & \cdots & a
  \end{matrix}
\end{array}\right)\in {\rm T}_{n+m}(R) : a, b_i,c_j \in R \right\},$$

\noindent and let we state

$${\rm U}_{n}(R)=\left\{ \begin{pmatrix}
   a & b_1 & b_2 & b_3 & b_4 & \cdots & b_{n-1} \\
   0 & a & c_1 & c_2 & c_3 & \cdots & c_{n-2} \\
   0 & 0 & a & b_1 & b_2 & \cdots & b_{n-3} \\
   0 & 0 & 0 & a & c_1 & \cdots & c_{n-4} \\
   \vdots & \vdots & \vdots & \vdots &  &  & \vdots \\
   0 &0 & 0 & 0 & 0 & \cdots & a
\end{pmatrix}\in {\rm T}_{n}(R) :  a, b_i, c_j \in R \right\}.$$

Thereby, we have the following.

\begin{example}\label{exa3.29}
Let $R$ be a ring. Then, the following statements are equivalent:
\par
(i) ${\rm S}_{n,m}(R)$ is a NUS-nil clean ring.
\par
(ii) ${\rm T}_{n,m}(R)$ is a NUS-nil clean ring.
\par
(iii) ${\rm U}_{n}(R)$ is a NUS-nil clean ring.
\par
(iv) $R$ is a NUS-nil clean ring.
\end{example}

Let $\alpha$ be an endomorphism of $R$. We denote by $R[x,\alpha ]$ the {\it skew polynomial ring} whose elements are the polynomials over $R$; the addition is defined as usual, and the multiplication is defined by the equality $xr=\alpha (r)x$ for any $r\in R$. So, there is a ring isomorphism $$\varphi : \dfrac{R[x,\alpha]}{\langle x^{n}\rangle }\rightarrow {\rm T}_{n}(R,\alpha),$$ given by $$\varphi (a_{0}+a_{1}x+\ldots +a_{n-1}x^{n-1}+\langle x^{n} \rangle )=(a_{0},a_{1},\ldots ,a_{n-1})$$ with $a_{i}\in R$, $0\leq i\leq n-1$. Thus, one deduces that ${\rm T}_{n}(R,\alpha )\cong \dfrac{R[x,\alpha ]}{\langle  x^{n}\rangle}$, where $\langle x^{n}\rangle$ is the ideal generated by $x^{n}$.

Besides, $R[[x, \alpha]]$ denotes the {\it ring of skew formal power series} over $R$; that is, all formal power series in $x$ with coefficients from $R$ with multiplication defined by $xr = \alpha(r)x$ for all $r \in R$. On the other hand, we know that the isomorphism $\dfrac{R[x,\alpha ]}{\langle x^{n}\rangle}\cong \dfrac{R[[x,\alpha ]]}{\langle x^{n}\rangle}$ is fulfilled.

\medskip

We, thus, extract the following three consequences.

\begin{corollary}\label{2.21}
Let $R$ be a ring with an endomorphism $\alpha$ such that $\alpha (1)=1$. Then, the following statements are equivalent:
\begin{enumerate}
\item
$\dfrac{R[x,\alpha ]}{\langle x^{n}\rangle }$ is a strongly NUS-nil clean ring.
\item
$\dfrac{R[[x,\alpha ]]}{\langle x^{n}\rangle }$ is a strongly NUS-nil clean ring.
\item
$R$ is a strongly NUS-nil clean ring.
\end{enumerate}
\end{corollary}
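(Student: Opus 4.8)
The plan is to deduce this corollary directly from Corollary \ref{2.20} together with the two ring isomorphisms recalled immediately before the statement, namely $\varphi\colon R[x,\alpha]/\langle x^n\rangle \xrightarrow{\ \cong\ } {\rm T}_n(R,\alpha)$ and $R[x,\alpha]/\langle x^n\rangle \cong R[[x,\alpha]]/\langle x^n\rangle$. The first observation to record is that being strongly NUS-nil clean is an isomorphism-invariant property of a ring, since it is formulated entirely in terms of the ring structure (units, nilpotents, square-idempotents, and the commuting condition); thus any ring isomorphic to a strongly NUS-nil clean ring is again strongly NUS-nil clean.

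Granting that, I would argue (i) $\Leftrightarrow$ (iii) as follows: by the isomorphism $\varphi$, the ring $R[x,\alpha]/\langle x^n\rangle$ is strongly NUS-nil clean if and only if ${\rm T}_n(R,\alpha)$ is, and by Corollary \ref{2.20} the latter holds if and only if $R$ is strongly NUS-nil clean. For (ii) $\Leftrightarrow$ (iii), I would compose the two displayed isomorphisms to get $R[[x,\alpha]]/\langle x^n\rangle \cong {\rm T}_n(R,\alpha)$, and then invoke Corollary \ref{2.20} again in exactly the same way. The implication (i) $\Leftrightarrow$ (ii) then follows for free (or may simply be noted from $R[x,\alpha]/\langle x^n\rangle \cong R[[x,\alpha]]/\langle x^n\rangle$ directly).

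There is no genuine obstacle in the argument; the only place that merits a careful sentence is checking that $\varphi$ really is a \emph{ring} isomorphism and not merely an additive bijection. One must verify that the product formula defining the coefficients $c_i$ in ${\rm T}_n(R,\alpha)$ coincides with the image under $\varphi$ of the product in $R[x,\alpha]/\langle x^n\rangle$ (using $xr=\alpha(r)x$ and reducing modulo $x^n$), and that $\varphi$ carries the identity to the identity matrix $(1,0,\dots,0)$ — this last point is precisely why the hypothesis $\alpha(1)=1$ is assumed. Once this compatibility is in place, the three equivalences are immediate, and I would present the proof in no more than a few lines, citing Corollary \ref{2.20} at the two key steps.
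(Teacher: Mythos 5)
Your proposal is correct and is exactly the argument the paper intends: the corollary is stated immediately after the isomorphisms $R[x,\alpha]/\langle x^{n}\rangle \cong {\rm T}_{n}(R,\alpha) \cong R[[x,\alpha]]/\langle x^{n}\rangle$ are recalled, and the equivalences follow by transporting the (isomorphism-invariant) strongly NUS-nil clean property through these isomorphisms and invoking Corollary \ref{2.20}. The paper omits the proof entirely, so your write-up, including the remark on why $\alpha(1)=1$ is needed for $\varphi$ to be a ring isomorphism, is if anything slightly more careful than the source.
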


\begin{corollary}
Let $R$ be a ring. Then, the following statements are equivalent:
\begin{enumerate}
\item
$\dfrac{R[x]}{\langle x^{n}\rangle }$ is a strongly NUS-nil clean ring.
\item
$\dfrac{R[[x]]}{\langle x^{n}\rangle }$ is a strongly NUS-nil clean ring.
\item
$R$ is a strongly NUS-nil clean ring.
\end{enumerate}
\end{corollary}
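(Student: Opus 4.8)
The plan is to deduce this from Corollary~\ref{2.21} by specializing the endomorphism $\alpha$ to the identity map $\mathrm{id}_R$. First I would observe that when $\alpha=\mathrm{id}_R$, the skew polynomial ring $R[x,\alpha]$ is exactly the ordinary polynomial ring $R[x]$, and likewise $R[[x,\alpha]]$ is the ordinary power series ring $R[[x]]$; moreover $\langle x^n\rangle$ is the same two-sided ideal in both the skew and the non-skew settings since $x$ is central when $\alpha$ is the identity. Hence the three quotient rings $\dfrac{R[x]}{\langle x^n\rangle}$, $\dfrac{R[[x]]}{\langle x^n\rangle}$, and $R$ are, respectively, identical (or canonically isomorphic) to $\dfrac{R[x,\mathrm{id}]}{\langle x^n\rangle}$, $\dfrac{R[[x,\mathrm{id}]]}{\langle x^n\rangle}$, and $R$.

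Next I would note that the identity endomorphism trivially satisfies the hypothesis $\alpha(1)=1$ required by Corollary~\ref{2.21}, so that corollary applies verbatim and yields that these three rings are simultaneously strongly NUS-nil clean or not. Transporting the equivalences across the identifications above gives exactly the asserted equivalence of (i), (ii), and (iii). I would write this out in two or three sentences: set $\alpha=\mathrm{id}_R$ in Corollary~\ref{2.21}, record the identifications $R[x,\mathrm{id}_R]=R[x]$ and $R[[x,\mathrm{id}_R]]=R[[x]]$, and conclude.

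There is essentially no obstacle here, since all the real work was done in establishing Corollary~\ref{2.20} (the isomorphism $\mathrm{T}_n(R,\alpha)/I\cong R$ with $I^n=0$) and Corollary~\ref{2.21}. If one wanted to be fully self-contained rather than invoking the skew version, the only point needing a word of care is that the chain of isomorphisms $\mathrm{T}_n(R)\cong R[x]/\langle x^n\rangle\cong R[[x]]/\langle x^n\rangle$ is a ring isomorphism and that the nilpotent ideal $I$ of strictly upper triangular matrices corresponds under this isomorphism to $\langle x\rangle/\langle x^n\rangle$, which is nilpotent of index $n$; then Proposition~\ref{2.13}(i), applied to this nil-ideal, gives the equivalence directly. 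Either route is short, so the main (and only) thing to get right is the bookkeeping of which ideal plays the role of the nil-ideal in Proposition~\ref{2.13}.
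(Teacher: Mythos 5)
Your proposal is correct and matches the paper's (implicit) argument exactly: the paper states this corollary immediately after Corollary~\ref{2.21} precisely as the specialization $\alpha=\mathrm{id}_R$, under which $R[x,\alpha]=R[x]$ and $R[[x,\alpha]]=R[[x]]$, with all the real work already done in Corollaries~\ref{2.20} and~\ref{2.21} via the nilpotent ideal $I$ and Proposition~\ref{2.13}. Your remark on the alternative self-contained route through $\langle x\rangle/\langle x^n\rangle$ is sound but not needed.
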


\begin{corollary}\label{2.57}
Let $R$ be a ring, and let
\begin{center}
$S_{n}(R):=\left\lbrace (a_{ij})\in T_{n}(R)\, | \, a_{11}=a_{22}=\cdots=a_{nn}\right\rbrace.$
\end{center}
Then, the following statements are equivalent:
\begin{enumerate}
\item	
$S_{n}(R)$ is a strongly NUS-nil clean ring.
\item	
$R$ is a strongly NUS-nil clean ring.
\end{enumerate}
\end{corollary}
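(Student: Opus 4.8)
The plan is to exhibit a nil ideal $I$ of $S_n(R)$ with $S_n(R)/I \cong R$ and then invoke Proposition~\ref{2.13}. The natural candidate is
$$I := \left\{ (a_{ij}) \in S_n(R) \,\middle|\, a_{11} = a_{22} = \cdots = a_{nn} = 0 \right\},$$
that is, the set of strictly upper triangular matrices lying in $S_n(R)$ (which is in fact all of the strictly upper triangular matrices, since the diagonal-constancy constraint is vacuous once the diagonal is zero). First I would check that $I$ is a two-sided ideal of $S_n(R)$: it is clearly an additive subgroup, and multiplying a strictly upper triangular matrix on either side by an element of $S_n(R) \subseteq T_n(R)$ keeps it strictly upper triangular, hence in $I$. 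Second, $I$ is nil: $I \subseteq \mathrm{Nil}(T_n(R))$ because every strictly upper triangular $n\times n$ matrix satisfies $X^n = 0$; in fact $I^n = 0$, so $I$ is even nilpotent.

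Next I would set up the isomorphism $S_n(R)/I \cong R$. Define $\pi : S_n(R) \to R$ by sending $(a_{ij})$ to its common diagonal entry $a_{11}$. This is additive, and it is multiplicative because the $(1,1)$-entry of a product of two upper triangular matrices is the product of their $(1,1)$-entries; it is surjective since $r \mapsto rI_n$ (the scalar matrix) lies in $S_n(R)$ and maps to $r$; and its kernel is exactly $I$ by construction. Hence $S_n(R)/I \cong R$. With $I$ nil and this isomorphism in hand, Proposition~\ref{2.13}(i) gives that $S_n(R)$ is strongly NUS-nil clean if and only if $R$ is, which is precisely the equivalence (i) $\Leftrightarrow$ (ii).

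I do not anticipate a genuine obstacle here, since the argument is structurally identical to the proofs of Corollaries~\ref{2.17} and~\ref{2.20}. The only point requiring a moment's care is the verification that $I$ as defined really is closed under multiplication by $S_n(R)$ \emph{and} that the quotient is $R$ rather than some larger constant-diagonal ring: one must note that imposing $a_{11} = \cdots = a_{nn}$ in $S_n(R)$ together with killing the (constant) diagonal leaves no residual structure, so the quotient collapses exactly to $R$ via the single diagonal coordinate. Once that bookkeeping is done, everything reduces to the already-established Proposition~\ref{2.13}.
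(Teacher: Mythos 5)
Your proof is correct and follows essentially the same route as the paper: the ideal $I$ you define coincides with the paper's $I=\{(a_{ij})\in S_n(R) : a_{11}=0\}$ (the constant-diagonal constraint forces all diagonal entries to vanish), and both arguments conclude by observing that $I$ is nilpotent with $S_n(R)/I\cong R$ and invoking Proposition~\ref{2.13}. Your write-up simply supplies the verifications the paper leaves as ``rather evident.''
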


\begin{proof}	
Assuming $I=\{(a_{ij}) \in S_n(R) : a_{11}=0\}$, it is rather evident that $I$ is a nil-ideal of $S_n(R)$ such that $S_n(R)/I \cong R$ holds, as inspected.
\end{proof}

The following example demonstrates that the matrix constructions are more complicated than we anticipate. Specifically, we exhibit a concrete construction of a strongly NUS-nil clean ring which is {\it not} strongly square-nil clean.

\begin{example}\label{2.24}
Let $R={\rm M}_2(\mathbb{Z}_2)$. It can be shown by a direct computations that $$R = U(R) \cup {\rm Id}(R) \cup {\rm Nil}(R).$$ Thus, $R$ is a strongly NUS-nil clean ring, but it is {\it not} strongly square-nil clean. Indeed, by contrary, assume that $T_2(\mathbb{Z}_2)$ is strongly square nil clean. Then, $A=\begin{pmatrix}
		1 & 1  \\
		1 & 0 \\
	\end{pmatrix} \in T_2(\mathbb{Z}_2)$. So, there are $E^4=E^2\in T_2(\mathbb{Z}_2)$ and $N\in Nil(T_2(\mathbb{Z}_2))$ such that $A=E+N$ and $EN=NE$. It follows now that $A^4-A^2=\begin{pmatrix}
		1 & 0 \\
		0 & 1
	\end{pmatrix}\in Nil(T_2(\mathbb{Z}_2))$, that is an obvious contradiction. Therefore, $T_2(\mathbb{Z}_2)$ is really {\it not} strongly NUS-nil clean, as claimed.
\end{example}

The next assertion arises logically.

\begin{proposition}\label{2.25}
For any ring $R \neq 0$ and any integer $n \ge 3$, the ring ${\rm M}_n(R)$ is not strongly NUS-nil clean.
\end{proposition}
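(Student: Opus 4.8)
By Theorem~\ref{7}, a ring $S$ is strongly NUS-nil clean precisely when $s^4-s^2\in{\rm Nil}(S)$ for every $s\notin U(S)$. So to prove that ${\rm M}_n(R)$ (with $R\neq 0$ and $n\geq 3$) is \emph{not} strongly NUS-nil clean, it is enough to exhibit one non-invertible matrix $A$ for which $A^4-A^2$ is not nilpotent. The plan is to settle the case $n=3$ by an explicit $3\times 3$ example, and then to deduce all $n\geq 3$ from it by a corner argument.

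For the reduction, let $e={\rm diag}(1,1,1,0,\dots,0)\in{\rm M}_n(R)$ for $n\geq 3$. Since $R\neq 0$ this is a nonzero idempotent, and the corner ring $e\,{\rm M}_n(R)\,e$ is canonically isomorphic to ${\rm M}_3(R)$. By Lemma~\ref{2.14}, if ${\rm M}_n(R)$ were strongly NUS-nil clean then so would be its corner ${\rm M}_3(R)$; hence it suffices to show that ${\rm M}_3(R)$ is not strongly NUS-nil clean for every nonzero ring $R$. (One could instead skip this step and simply pad the $3\times 3$ matrix below with an $(n-3)$-dimensional zero block, but the corner reduction is cleaner.)

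Now take $A=\begin{pmatrix}0&1&0\\0&1&1\\0&1&0\end{pmatrix}\in{\rm M}_3(R)$, the entries being the images of $0,1\in\mathbb{Z}$. Its first column is zero, so $Ae_1=0$ with $e_1\neq 0$; hence $A$ is not left-invertible, in particular $A\notin U({\rm M}_3(R))$. A direct computation of the low powers gives $A^2=\begin{pmatrix}0&1&1\\0&2&1\\0&1&1\end{pmatrix}$ and $A^4=\begin{pmatrix}0&3&2\\0&5&3\\0&3&2\end{pmatrix}$, so that $C:=A^4-A^2=\begin{pmatrix}0&2&1\\0&3&2\\0&2&1\end{pmatrix}$. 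Splitting $R^3=R\oplus R^2$ and writing $C=\begin{pmatrix}0&w\\0&B\end{pmatrix}$ with $w=(2,1)$ and $B=\begin{pmatrix}3&2\\2&1\end{pmatrix}$, an easy induction gives $C^k=\begin{pmatrix}0&wB^{k-1}\\0&B^k\end{pmatrix}$ for all $k\geq 1$. Since $B$ has central entries and $\det B=-1$, the matrix $B$ is a unit of ${\rm M}_2(R)$ (with inverse $\begin{pmatrix}-1&2\\2&-3\end{pmatrix}$), so every power $B^k$ is a unit of ${\rm M}_2(R)$; as $R\neq 0$, no $C^k$ can vanish. Thus $A^4-A^2\notin{\rm Nil}({\rm M}_3(R))$, and Theorem~\ref{7} shows that ${\rm M}_3(R)$ is not strongly NUS-nil clean, which gives the conclusion.

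The only genuine difficulty is \emph{uniformity in $R$}: the chosen $A$ must violate the Theorem~\ref{7} criterion simultaneously over every nonzero base ring, in particular over $\mathbb{Z}_2$ and $\mathbb{Z}_3$, where many natural candidates collapse because $A^4-A^2$ turns out to be nilpotent or even zero (indeed ${\rm M}_2(\mathbb{Z}_2)$ and ${\rm M}_2(\mathbb{Z}_3)$ \emph{are} strongly NUS-nil clean, which is also why dimension $3$ is essential). The device that makes the computation robust is forcing $A^4-A^2$ to carry an invertible $2\times 2$ block of determinant $\pm 1$, a feature insensitive to the characteristic of $R$; once that is arranged, everything else is a short matrix multiplication and the elementary block bookkeeping above.
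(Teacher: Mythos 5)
Your proof is correct and follows essentially the same route as the paper: reduce to $n=3$ via the corner Lemma~\ref{2.14}, then exhibit a non-invertible $A\in{\rm M}_3(R)$ with $A^4-A^2\notin{\rm Nil}({\rm M}_3(R))$ and invoke Theorem~\ref{7}. The paper's witness is $\left(\begin{smallmatrix}1&1&0\\1&0&0\\0&0&0\end{smallmatrix}\right)$, whose $A^4-A^2$ carries the very same determinant $-1$ block $\left(\begin{smallmatrix}3&2\\2&1\end{smallmatrix}\right)$ as yours; the only difference is that you actually verify the non-nilpotency (which the paper merely asserts).
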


\begin{proof}
It suffices to establish that ${\rm M}_3(R)$ is {\it not} a strongly NUS-nil clean ring having in mind Lemma \ref{2.14}. To this goal, consider the matrix
	$$A =\begin{pmatrix}
		1 & 1 & 0 \\
		1 & 0 & 0 \\
		0 & 0 & 0
\end{pmatrix} \notin U({\rm M}_3(R)).$$ Then, we have $$A^4- A^2 \notin {\rm Nil}({\rm M}_3(R)).$$ Consequently, Theorem \ref{7} is applicable to get that $R$ cannot be a strongly NUS-nil clean ring, as asserted.
\end{proof}

An immediate consequence is the following one.

\begin{corollary}\label{2.35}
Let $R$ be a strongly NUS-nil clean ring. Then, for any $n>2$, there does not exist $0\neq e\in {\rm Id}(R)$ such that $eRe\cong {\rm M}_{n}(S)$ for some non-zero ring $S$.
\end{corollary}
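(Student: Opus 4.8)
The plan is to argue by contradiction together with a reduction to the corner ring. Suppose $R$ is strongly NUS-nil clean and that, for some $n>2$, there is a nonzero idempotent $e\in{\rm Id}(R)$ with $eRe\cong{\rm M}_n(S)$ for a nonzero ring $S$. By Lemma \ref{2.14}, the corner subring $eRe$ is again strongly NUS-nil clean, since $e\neq 0$. Transporting this property across the ring isomorphism $eRe\cong{\rm M}_n(S)$, we conclude that ${\rm M}_n(S)$ is a strongly NUS-nil clean ring for some nonzero $S$ and some $n\ge 3$.

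Now invoke Proposition \ref{2.25}: for any nonzero ring $S$ and any integer $n\ge 3$, the matrix ring ${\rm M}_n(S)$ is \emph{not} strongly NUS-nil clean. This directly contradicts the previous paragraph, so no such idempotent $e$ can exist, which is exactly the assertion of the corollary.

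The only point requiring a word of care is that "strongly NUS-nil clean" is manifestly a ring-theoretic property invariant under ring isomorphism, so that $eRe\cong{\rm M}_n(S)$ genuinely transfers it; this is immediate from the characterization in Theorem \ref{7}, since the condition $a^4-a^2\in{\rm Nil}(R)$ for all $a\notin U(R)$ is preserved by any ring isomorphism (isomorphisms preserve units and nilpotents). There is essentially no obstacle here: the corollary is a routine packaging of Lemma \ref{2.14} and Proposition \ref{2.25}, and the short formal write-up consists of the two displayed reduction steps above.
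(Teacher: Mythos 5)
Your proof is correct and follows essentially the same route as the paper: assume such an $e$ exists, apply Lemma \ref{2.14} to conclude $eRe\cong{\rm M}_n(S)$ is strongly NUS-nil clean, and contradict Proposition \ref{2.25}. The remark about isomorphism-invariance is a harmless extra precaution not spelled out in the paper.
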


\begin{proof}
Assume on the contrary that there exists $0\neq e\in {\rm Id}(R)$ such that $eRe\cong {\rm M}_{n}(S)$ for some non-zero ring $S$. Since $R$ is strongly NUS-nil clean, it follows from Lemma \ref{2.14} that $eRe$ has to be strongly NUS-nil clean too, and so ${\rm M}_{n}(S)$ is also strongly NUS-nil clean, implying a contradiction with Proposition \ref{2.25}, as expected.
\end{proof}

The following affirmation is somewhat surprising.

\begin{lemma}\label{2.26}
Let ${\rm M}_2(R)$ be a strongly NUS-nil clean ring. Then, $R$ is a strongly square-nil clean ring.
\end{lemma}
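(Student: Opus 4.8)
The plan is to push everything back to the criterion of Theorem \ref{7} and then replay its converse half, the point being that here the decisive relation will hold for \emph{every} element of $R$, not only for the non-units, and this is exactly what forces $R$ to be strongly square-nil clean rather than merely strongly NUS-nil clean.

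First I would fix an arbitrary $a\in R$ and consider $A=\begin{pmatrix} a & 0\\ 0 & 0\end{pmatrix}\in {\rm M}_2(R)$. Since the bottom row of $A$ vanishes, $A$ is not invertible in ${\rm M}_2(R)$, no matter what $a$ is. As ${\rm M}_2(R)$ is strongly NUS-nil clean, Theorem \ref{7} gives $A^4-A^2\in {\rm Nil}({\rm M}_2(R))$; but $A^4-A^2=\begin{pmatrix} a^4-a^2 & 0\\ 0 & 0\end{pmatrix}$, and a matrix of this shape is nilpotent in ${\rm M}_2(R)$ if and only if its $(1,1)$-entry is nilpotent in $R$. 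Hence $a^4-a^2\in {\rm Nil}(R)$ for every $a\in R$. Going down from the fourth to the third power is routine: $a$ commutes with $a^2-1$, so $(a^3-a)^2=a^2(a^2-1)^2=(a^4-a^2)(a^2-1)$ is the product of a nilpotent element with an element commuting with it, hence nilpotent, and therefore $a^3-a\in {\rm Nil}(R)$ for every $a\in R$.

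From here I would follow the converse part of the proof of Theorem \ref{7} line by line, only observing that every step now applies to an arbitrary $a\in R$. If $2\in U(R)$, then \cite[Lemma 2.6]{zhou} produces an integer-coefficient polynomial expression $e:=\alpha(a)$ with $e^3=e$ (so $e^4=e^2$) and $a-e\in {\rm Nil}(R)$; since $e$ is a polynomial in $a$ it commutes with $a$, hence with $n:=a-e\in {\rm Nil}(R)$, and $a=e+n$ is a strongly square-nil clean decomposition. If $2\notin U(R)$, then applying the first step to $a=2$ gives $12=2^4-2^2\in {\rm Nil}(R)$, whence $6\in {\rm Nil}(R)$; combining $a^3-a\in {\rm Nil}(R)$ with $6\in {\rm Nil}(R)$ and running the same chain of exponent reductions as in the proof of Theorem \ref{7} yields $a(a-1)\in {\rm Nil}(R)$, and a further application of \cite[Lemma 2.6]{zhou} gives $\beta(a)\in\mathbb{Z}[a]$ with $\beta(a)^2=\beta(a)$ and $a-\beta(a)\in {\rm Nil}(R)$, again a strongly square-nil clean decomposition. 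Since this works for \emph{every} $a\in R$, units included, $R$ is strongly square-nil clean.

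The only delicate point is the case $2\notin U(R)$: one must confirm that the exponent juggling in the proof of Theorem \ref{7} leading to $a(a-1)\in {\rm Nil}(R)$ relies only on $a^3-a\in {\rm Nil}(R)$ and $6\in {\rm Nil}(R)$ and never on the non-invertibility of $a$. Granting this, the real content of the argument is entirely in the matrix observation of the first step, and the rest is bookkeeping already carried out in Theorem \ref{7}.
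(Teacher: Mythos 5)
Your proof is correct, and its decisive step --- observing that $\begin{pmatrix} a & 0\\ 0 & 0\end{pmatrix}$ is never invertible in ${\rm M}_2(R)$, so Theorem~\ref{7} forces $a^4-a^2\in{\rm Nil}(R)$ for \emph{every} $a\in R$ --- is exactly the paper's. The only divergence is in the finish: the paper passes from $a^3-a\in{\rm Nil}(R)$ directly to a decomposition $a=e+n$ with $e^3=e$ and $en=ne$ by citing \cite[Theorem 2.12]{zhou}, whereas you re-run the converse half of Theorem~\ref{7} (the case split on $2\in U(R)$, the reduction from $12\in{\rm Nil}(R)$ to $6\in{\rm Nil}(R)$, the exponent chain down to $a(a-1)\in{\rm Nil}(R)$, and \cite[Lemma 2.6]{zhou}). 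Both finishes are valid; yours is longer but more self-contained, since it reuses machinery already set out inside the paper rather than importing a second external result, and the caveat you flag --- that the exponent chain depends only on $a^3-a\in{\rm Nil}(R)$ and $6\in{\rm Nil}(R)$, never on non-invertibility of $a$ --- is indeed the one point that needed checking and does hold.
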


\begin{proof}
Let $a \in R$. Then, one finds that $$A=\begin{pmatrix}
		a & 0 \\ 0 & 0
\end{pmatrix} \notin U({\rm M}_2(R)).$$ Thus, Lemma \ref{7} works to get that $A^4-A^2 \in {\rm Nil}({\rm M}_2(R))$. So, $a^4- a^2 \in {\rm Nil}(R)$ whence $a^3- a \in {\rm Nil}(R)$. Therefore, bearing in mind 
\cite[Theorem 2.12]{zhou}, one writes that $a=e+n$, where $e^3=e$ and $en=ne$. As $e^4=e^2$, $e$ is a square-idempotent and so $R$ is a strongly-square nil clean ring, as formulated.
\end{proof}

We now intend to examine some structural characterizations.

\begin{lemma}\label{2.27}
If $R$ is a local ring with nil $J(R)$, then $R$ is strongly NUS-nil clean.
\end{lemma}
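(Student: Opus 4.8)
The plan is to show that every non-unit of a local ring $R$ with nil $J(R)$ admits a strongly square-nil clean decomposition. The key observation is that in a local ring, the non-units are exactly the elements of $J(R)$: if $a \notin U(R)$, then $a \in J(R)$ (this is the defining property of a local ring, that $R/J(R)$ is a division ring and every element outside $J(R)$ is a unit). Since $J(R)$ is assumed to be nil, every non-unit $a$ is itself nilpotent.

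Given that, the decomposition is immediate: write $a = 0 + a$, where $e = 0$ is a square-idempotent (indeed $0^2 = 0 = 0^4$) and $n = a \in \mathrm{Nil}(R)$. Clearly $e$ and $n$ commute since $e = 0$. Hence $a$ is strongly square-nil clean, and since $a$ was an arbitrary non-unit, $R$ is strongly NUS-nil clean. Alternatively, one can phrase this via Theorem \ref{7}: for any $a \notin U(R)$ we have $a \in J(R) \subseteq \mathrm{Nil}(R)$, so $a^4 - a^2 = a^2(a^2 - 1) \in \mathrm{Nil}(R)$, which gives the conclusion directly.

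There is essentially no obstacle here; the only thing to be careful about is invoking the correct characterization of non-units in a local ring. One should recall explicitly that ``local'' means $R$ has a unique maximal left (equivalently right) ideal, which forces the set of non-units to coincide with $J(R)$, so that niceness of $J(R)$ transfers verbatim to all non-units. I would state this as a one-line remark and then conclude. If desired, the proof can be made to fit the stylistic pattern of the surrounding results by appealing to Theorem \ref{7} rather than exhibiting the decomposition by hand, but both routes are trivial once the identification $R \setminus U(R) = J(R)$ is in place.
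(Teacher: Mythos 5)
Your proof is correct and follows essentially the same route as the paper: identify the non-units of a local ring with $J(R)$, use nilness of $J(R)$ to conclude every non-unit is nilpotent, and observe that a nilpotent element is trivially strongly square-nil clean (via the decomposition $a=0+a$). The paper's proof is exactly this, just stated more tersely.
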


\begin{proof}
Let $a\in R$ and $a\notin U(R)$. Since $R$ is local, $a\in J(R)$ and hence $a\in {\rm Nil}(R)$. So, $a$ is a nilpotent element, and thus it is a strongly NUS-nil clean element, as required.
\end{proof}

We now can extract the following criterion.

\begin{corollary}\label{2.50}
Let $R$ be a ring with only trivial idempotents. Then, $R$ is strongly NUS-nil clean if, and only if, $R$ is a local ring with $J(R)$ nil.
\end{corollary}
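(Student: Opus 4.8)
\textbf{Proof proposal for Corollary \ref{2.50}.}

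The plan is to prove the two implications separately, using the machinery already developed. The backward direction is immediate: if $R$ is local with $J(R)$ nil, then Lemma \ref{2.27} gives at once that $R$ is strongly NUS-nil clean, and this direction needs no appeal to the triviality of idempotents. So the entire content is in the forward implication: assuming $R$ is strongly NUS-nil clean and $\mathrm{Id}(R) = \{0,1\}$, I must show $R$ is local with nil Jacobson radical.

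First I would dispose of the radical: by Lemma \ref{8}, the hypothesis that $R$ is strongly NUS-nil clean already forces $J(R)$ to be nil, with no further work. So it remains only to prove that $R$ is local, i.e. that $R/J(R)$ is a division ring. The natural route is to pass to $\overline R := R/J(R)$. By Proposition \ref{2.13}(ii), $\overline R$ is again strongly NUS-nil clean, and $J(\overline R) = 0$. Moreover idempotents lift modulo the nil ideal $J(R)$, so $\overline R$ also has only trivial idempotents. Thus it suffices to show: a semiprimitive (i.e. $J = 0$) strongly NUS-nil clean ring with only trivial idempotents is a division ring. Now take any $0 \neq a \in \overline R$. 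If $a \in U(\overline R)$ we are happy; otherwise $a \notin U(\overline R)$, so Theorem \ref{7} yields $a^4 - a^2 = a^2(a^2-1) \in \mathrm{Nil}(\overline R)$. But $\overline R$ is semiprimitive; I would argue (as in the proof of Lemma \ref{2.26}, via \cite[Theorem 2.12]{zhou}, noting $a^3 - a \in \mathrm{Nil}(\overline R)$ too) that there is an idempotent $e \in \overline R$ with $e^3 = e$, hence $e^2$ a genuine idempotent, and $a - e \in \mathrm{Nil}(\overline R)$ commuting with $e$. Since the only idempotents are $0$ and $1$, either $e^2 = 0$, forcing $e = 0$ (as $e = e^3 = e\cdot e^2$) and hence $a \in \mathrm{Nil}(\overline R) = \mathrm{Nil}$; but a nonzero nilpotent in a semiprime ring is impossible, so this case gives $a = 0$, contradicting $a \neq 0$. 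Or $e^2 = 1$; then $a = e + n$ with $en = ne$ and $n$ nilpotent, and since $e$ is a unit and $n$ commutes with it, $a = e(1 + e^{-1}n)$ with $1 + e^{-1}n \in U(\overline R)$, so $a \in U(\overline R)$ — contradicting $a \notin U(\overline R)$. Hence every nonzero element of $\overline R$ is a unit, so $\overline R$ is a division ring and $R$ is local.

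The step I expect to be the main obstacle is the clean extraction of the genuine idempotent $e$ from the relation $a^2(a^2-1) \in \mathrm{Nil}(\overline R)$ in the semiprimitive setting, and then the careful case analysis using $\mathrm{Id}(\overline R) = \{0,1\}$ together with semiprimeness (no nonzero nilpotents) to kill the "$e = 0$'' branch. One must be a little careful that the polynomial $\alpha(a)$ or $\beta(a)$ produced by \cite[Lemma 2.6]{zhou} / \cite[Theorem 2.12]{zhou} commutes with $a$ and that the resulting idempotent is forced into $\{0,1\}$; once that is in hand, the unit-versus-nilpotent dichotomy closes the argument. An alternative, slightly slicker finish avoiding the lifting lemma altogether: directly from $a \notin U(\overline R)$, $a$ is strongly square-nil clean, so $a = f + n$ with $f^2 = f^4$, $fn = nf$, $n$ nilpotent; then $f^2 \in \mathrm{Id}(\overline R) = \{0,1\}$, and in a semiprimitive ring $f^2 = f^4$ with $f^2$ idempotent forces $f = f^2$ (since $f - f^2$... here one checks $f(1-f)$ is handled by $f = f\cdot f^2\cdot f^{-2}$ on the unit part, or simply that $f - f^3$ is nilpotent hence zero), reducing to $f \in \{0,1\}$ and the same two cases. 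I would present whichever of these is shortest given the cited lemmas.
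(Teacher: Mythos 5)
Your overall architecture matches the paper's: the backward direction via Lemma \ref{2.27}, $J(R)$ nil via Lemma \ref{8}, and for the forward direction a square-nil clean decomposition $a=e+n$ of a non-unit, the observation that $e^{2}$ is a genuine idempotent hence $0$ or $1$, the case $e^{2}=1$ killed because then $e$ is a unit commuting with $n$ and $a=e(1+e^{-1}n)$ would be a unit. The paper runs this directly in $R$ and concludes that every non-unit is nilpotent, then cites \cite[Proposition 19.3]{14} to get that $R$ is local. Your detour through $\overline{R}=R/J(R)$ is legitimate as far as it goes (idempotents do lift modulo the nil ideal $J(R)$), but it is also unnecessary, and it is exactly there that your argument breaks.

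The genuine gap is the step where you dismiss the branch $e^{2}=0$, i.e.\ $a\in\mathrm{Nil}(\overline{R})$, by asserting that ``a nonzero nilpotent in a semiprime ring is impossible.'' That is false: semiprime (no nonzero nilpotent \emph{ideals}) and semiprimitive ($J=0$) do not imply reduced --- ${\rm M}_{2}(\mathbb{Z}_{2})$ is semiprimitive and full of nonzero nilpotents. The same slip reappears in your ``slicker finish'' when you claim $f-f^{3}$ is ``nilpotent hence zero.'' So you cannot conclude $a=0$ in that branch, and your proof that $\overline{R}$ is a division ring does not close. The repair is the one the paper uses: from the case analysis you have shown that \emph{every} non-unit (of $R$, or of $\overline{R}$) is nilpotent; by \cite[Proposition 19.3]{14} a ring in which every element is either a unit or nilpotent is local, so $R$ is local, and only \emph{then} does $J(R)$ nil (resp.\ $J(\overline{R})=0$) tell you the non-units are all in the radical. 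With that substitution your argument is correct and essentially coincides with the paper's; without it, the key case is unjustified.
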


\begin{proof}
Assume that $R$ is a strongly NUS-nil clean ring, so $J(R)$ is nil in accordance with Lemma \ref{8}. If $a \notin U(R)$, then we have $a=q+u$, where $qu=uq$, $q\in Nil(R)$ and either $u^2=1$ or $u^2=0$. Since $a$ is not a unit, it must be that $a=u+q$, where $qu=uq$ and $u^2=0$ giving $a\in Nil(R)$. Thus, exploiting \cite[Proposition 19.3]{14}, $R$ must be a local ring.
	
Oppositely, suppose $R$ is a local ring with a nil Jacobson radical $J(R)$. So, for each $a \notin U(R)$, we have $a \in J(R) \subseteq Nil(R)$, whence $a$ is a nil-clean element, as requested.
\end{proof}

\begin{lemma}\label{2.55}
Suppose $R$ is a strongly NUS-nil clean ring ring and $2 \notin U(R)$. Then, either $2 \in {\rm Nil}(R)$ or $6 \in {\rm Nil}(R)$.
\end{lemma}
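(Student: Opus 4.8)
The plan is to apply the criterion of Theorem~\ref{7} to the single non-unit $a=2$ and then push the resulting relation in $R$ a little further by an elementary computation with the central elements $k\cdot 1_R$, $k\in\mathbb{Z}$.

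First, since $2\notin U(R)$ by hypothesis and $R$ is strongly NUS-nil clean, Theorem~\ref{7} gives $2^4-2^2=12\in{\rm Nil}(R)$. (If one prefers not to quote Theorem~\ref{7} directly, the same follows from a strongly square-nil clean decomposition $2=e+n$ with $e^4=e^2$, $en=ne$ and $n\in{\rm Nil}(R)$, upon factoring $2^4-2^2=n(2e+n)\bigl(2e^2+n(2e+n)-1\bigr)$, exactly as in the proof of Theorem~\ref{7}.)

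Next, I would note that $12\cdot 1_R$ is a central nilpotent element of $R$, so $12^{t}\cdot 1_R=0$ for some $t\in\mathbb{N}$; multiplying through by $3^{t}$ yields $36^{t}\cdot 1_R=0$, that is, $(6^2)^{t}=0$, whence $6\in{\rm Nil}(R)$. This already secures the stated disjunction. To see that the sharper first alternative occurs exactly when $3$ is a unit, split into cases: if $3\in U(R)$, then cancelling $3$ in $12=3\cdot 4\in{\rm Nil}(R)$ gives $4\in{\rm Nil}(R)$ and hence $2\in{\rm Nil}(R)$; if $3\notin U(R)$, one simply retains $6\in{\rm Nil}(R)$ from the previous step.

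There is no real obstacle here. The only point requiring a shade of care is the elementary observation that a central element some power of which is nilpotent is itself nilpotent — this is what legitimises passing from $12\in{\rm Nil}(R)$ to $6\in{\rm Nil}(R)$, and from $4\in{\rm Nil}(R)$ to $2\in{\rm Nil}(R)$ — and the only structural decision to be made is the dichotomy on whether $3\in U(R)$.
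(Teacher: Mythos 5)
Your proposal is correct and follows essentially the same route as the paper: apply Theorem~\ref{7} to the non-unit $2$ to get $2^4-2^2=12\in{\rm Nil}(R)$ and then deduce $6\in{\rm Nil}(R)$ (the paper's displayed expression $2^4-2$ is a typo for $2^4-2^2$). Your extra dichotomy on whether $3\in U(R)$ is harmless but not needed, since the second alternative already always holds.
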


\begin{proof}
If $2\not\in Nil(R)$, then Theorem \ref{7} can be applied to derive that $2^4-2\in Nil(R)$. Hence, we get $12\in Nil(R)$ and so $6\in Nil(R)$.
\end{proof}

\begin{lemma}\label{2.29}
Let $R$ be a ring and $2 \in J(R)$. Then, the following conditions are equivalent:
\begin{enumerate}
\item
$R$ is a strongly NUS-nil clean ring.
\item
$R$ is a GSNC ring.
\end{enumerate}
\end{lemma}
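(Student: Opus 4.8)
The implication (2) $\Rightarrow$ (1) needs no hypothesis on $2$ and is immediate: if $R$ is GSNC and $a\notin U(R)$, then $a=e+n$ with $e^{2}=e$, $n\in Nil(R)$ and $en=ne$; since trivially $e^{4}=e^{2}$, this very decomposition already exhibits $a$ as a strongly square-nil clean element, so $R$ is strongly NUS-nil clean. Hence the whole content of the lemma — and the only place where $2\in J(R)$ is used — is the converse (1) $\Rightarrow$ (2).

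For (1) $\Rightarrow$ (2) the first step is to record that, by Lemma \ref{8}, $J(R)$ is a nil-ideal, so that $2\in J(R)\subseteq Nil(R)$. The second step is a reduction: I claim a non-unit $a\in R$ is strongly nil-clean as soon as $a-a^{2}\in Nil(R)$. To see this one passes to the commutative subring $S:=\mathbb{Z}[a]\le R$; there $Nil(S)=Nil(R)\cap S$ is a nil ideal of $S$, the hypothesis says that $\bar a$ is an idempotent of $S/Nil(S)$, and lifting that idempotent to some $e\in S=\mathbb{Z}[a]$ (idempotents lift modulo nil ideals) gives $e^{2}=e$, $a-e\in Nil(S)\subseteq Nil(R)$ and, $S$ being commutative, $ea=ae$ — a strongly nil-clean decomposition of $a$. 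Thus it suffices to prove that $a-a^{2}\in Nil(R)$ for every $a\notin U(R)$.

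The third step is the core computation. Fix $a\notin U(R)$; Theorem \ref{7} gives $a^{4}-a^{2}\in Nil(R)$. Working in $S=\mathbb{Z}[a]$, where both $a^{4}-a^{2}$ and $2$ lie in the ideal $Nil(S)$, one uses the identity
$$(a^{2}-a)^{2}=a^{2}(a-1)^{2}=a^{2}(a-1)(a+1)-2a^{2}(a-1)=(a^{4}-a^{2})-2a^{2}(a-1).$$
The right-hand side is a sum of two nilpotent elements of the commutative ring $S$, hence is nilpotent; therefore $(a^{2}-a)^{2}$, and so $a^{2}-a$, lies in $Nil(S)\subseteq Nil(R)$. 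By the reduction of the previous paragraph, $a$ is strongly nil-clean; as $a$ was an arbitrary non-unit, $R$ is GSNC.

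The one genuinely delicate point is the reduction in the second step: because $Nil(R)$ need not be an ideal of $R$, one cannot lift idempotents or add nilpotents directly in $R$, and must descend to the commutative subring $\mathbb{Z}[a]$ both to produce the idempotent and to make the key identity yield a nilpotent element. Once that is in place, invoking Lemma \ref{8} (to get $2\in Nil(R)$) and Theorem \ref{7} (to get $a^{4}-a^{2}\in Nil(R)$), together with the elementary binomial manipulation above, finishes the proof.
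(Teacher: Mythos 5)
Your proof is correct and follows essentially the same route as the paper: both arguments reduce (1)~$\Rightarrow$~(2) to showing $a^2-a\in Nil(R)$ for non-units $a$, and both use exactly the identity $a^2(1-a)^2=(a^4-a^2)-2a^2(a-a^2)$ together with $2\in Nil(R)$ (from Lemma~\ref{8}) and Theorem~\ref{7}. Your write-up is in fact slightly more careful than the paper's, since you explicitly justify the reduction step via idempotent lifting in the commutative subring $\mathbb{Z}[a]$ and correctly restrict the computation to non-units, whereas the paper states it ``for each $a\in R$'' even though Theorem~\ref{7} only applies to non-units.
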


\begin{proof}
(ii) $\Longrightarrow$ (i). It is straightforward.\\
(i) $\Longrightarrow$ (ii). It is sufficient to prove only that $a^2-a\in Nil(R)$ for each $a\in R$. To that end, choose $a\in R$. Thus, Theorem~\ref{7} allows us to infer that $a^4-a^2\in Nil(R)$, and so $a^2(1-a^2)\in Nil(R)$. But, as $2\in Nil(R)$, we get $a^2(1-a^2-2a+2a^2)\in Nil(R)$ and, therefore, $a^2(1-a)^2\in Nil(R)$. Thus, $a^2-a\in Nil(R)$, as needed.
\end{proof}

\begin{lemma}\label{2.56}
Suppose $R$ is a ring such that $2 \notin U(R)$. Then, the following items are equivalent:
\begin{enumerate}
\item
$R$ is a strongly NUS-nil clean ring.
\item
Either $R$ is a GSNC ring or $R$ is a strongly square-nil clean ring.
\end{enumerate}
\end{lemma}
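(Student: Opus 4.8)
The plan is to split into the two arithmetic regimes governed by the divisibility of $2$, exactly as in Lemma~\ref{2.55}, and to use Theorem~\ref{7} throughout as the working characterization of ``strongly NUS-nil clean.'' First I would note that $(\mathrm{ii})\Longrightarrow(\mathrm{i})$ is immediate: if $R$ is GSNC then every non-unit is strongly nil-clean, hence $a^2-a\in\mathrm{Nil}(R)$ for each $a\notin U(R)$, which forces $a^4-a^2\in\mathrm{Nil}(R)$; and if $R$ is strongly square-nil clean then in particular every non-unit is strongly square-nil clean, so $R$ is strongly NUS-nil clean by definition. The substance is the forward direction $(\mathrm{i})\Longrightarrow(\mathrm{ii})$.

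For the forward direction, assume $R$ is strongly NUS-nil clean with $2\notin U(R)$. By Lemma~\ref{2.55} either $2\in\mathrm{Nil}(R)$ or $6\in\mathrm{Nil}(R)$. In the first case $2\in\mathrm{Nil}(R)\subseteq J(R)$ (nilpotents central-ish aside, $2\in\mathrm{Nil}(R)$ certainly gives $2\in J(R)$ since $1+2r$ is a unit for all $r$), so Lemma~\ref{2.29} applies directly to conclude that $R$ is GSNC. Thus the only genuine work is the case $6\in\mathrm{Nil}(R)$ but $2\notin\mathrm{Nil}(R)$; here I would aim to show $R$ is strongly square-nil clean, i.e.\ that \emph{every} element of $R$ (not just every non-unit) is strongly square-nil clean. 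Since $6\in\mathrm{Nil}(R)$ and $2\notin\mathrm{Nil}(R)$, the ideal generated by $2$ and the ideal generated by $3$ are ``coprime modulo the nilradical,'' so modulo $\mathrm{Nil}(R)$ the ring decomposes: $R/\mathrm{Nil}(R)\cong R_2\times R_3$ where $R_2$ has $2=0$ and $R_3$ has $3=0$ (here $2\in U(R_3)$). By Proposition~\ref{2.13}(i) it suffices to prove the claim for $R/\mathrm{Nil}(R)$, and then by Lemma~\ref{2.4}-type reasoning for each factor separately.

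The key steps, then, are: (a) set up the Chinese-remainder splitting $R/\mathrm{Nil}(R)\cong R_2\times R_3$; (b) handle the $R_3$ factor, where $2$ is a unit --- here the hypothesis gives $a^3-a\in\mathrm{Nil}(R_3)$ for every non-unit, and a routine check (as already done inside the proof of Theorem~\ref{7}) extends this to \emph{all} $a$, after which \cite[Lemma 2.6]{zhou} yields $\alpha(a)$ with $\alpha(a)^3=\alpha(a)$ and $a-\alpha(a)$ nilpotent, giving the strongly square-nil clean decomposition since $\alpha(a)^4=\alpha(a)^2$; (c) handle the $R_2$ factor, where $2=0$ --- here the strongly NUS-nil clean hypothesis together with $2=0$ should, via Lemma~\ref{2.29}'s argument applied to each element, give $a^2-a\in\mathrm{Nil}(R_2)$ for all $a$, so $R_2$ is nil-clean, a fortiori strongly square-nil clean; (d) reassemble: the product $R_2\times R_3$ is strongly square-nil clean, hence $R/\mathrm{Nil}(R)$ is, hence $R$ is by Proposition~\ref{2.13}(i). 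The main obstacle I anticipate is step (a)--(c): making the extension from ``non-units satisfy the identity'' to ``all elements satisfy the identity'' rigorous on each factor, and in particular justifying that in the $R_3$ factor every element (including units) satisfies $a^3-a\in\mathrm{Nil}$; this may require first observing that in $R_3$, which is strongly NUS-nil clean with $2$ a unit, one can run the argument of Theorem~\ref{7} verbatim to get $\alpha(a)$ for arbitrary $a$, since that part of the proof of Theorem~\ref{7} never used non-invertibility of $a$ beyond extracting $a^3-a\in\mathrm{Nil}$. One must also take care that the splitting of $R/\mathrm{Nil}(R)$ is legitimate --- this uses that $\mathrm{Nil}(R)$ is an ideal, which holds here because $R$ is strongly $\pi$-regular by Proposition~\ref{2.12} (so $R/\mathrm{Nil}(R)$ is reduced and $6=0$ there splits the ring as a genuine direct product).
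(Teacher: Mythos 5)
Your overall strategy matches the paper's: dispose of $(\mathrm{ii})\Rightarrow(\mathrm{i})$ quickly, then in the forward direction invoke Lemma \ref{2.55} to split into the cases $2\in{\rm Nil}(R)$ (where Lemma \ref{2.29} gives GSNC, since a central nilpotent lies in $J(R)$) and $6\in{\rm Nil}(R)$ (where one shows $R$ is strongly square-nil clean via a Chinese Remainder decomposition). But there is a genuine gap in how you set up that decomposition. You pass to $R/{\rm Nil}(R)$ and justify this by claiming that ${\rm Nil}(R)$ is an ideal because $R$ is strongly $\pi$-regular. That implication is false: ${\rm M}_2(\mathbb{Z}_2)$ is strongly $\pi$-regular (indeed, by Example \ref{2.24} it is even strongly NUS-nil clean), yet $E_{12}$ and $E_{21}$ are nilpotent while their sum is a unit, so the nilpotent elements do not form an ideal. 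Consequently neither the quotient $R/{\rm Nil}(R)$ nor the appeal to Proposition \ref{2.13}(i) is available as you describe. The repair is what the paper does: since $6^k=0$ for some $k$ and $2^k,3^k$ are central and generate comaximal ideals of $R$ (because $\gcd(2^k,3^k)=1$ in $\mathbb{Z}$), the Chinese Remainder Theorem applies to $R$ itself, giving $R\cong R_1\times R_2$ with $2\in{\rm Nil}(R_1)$ and $3\in{\rm Nil}(R_2)$; no reduction modulo the nilpotents is needed.

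Your treatment of the two factors is also more laborious than necessary, and the difficulty you flag --- extending the identity $a^3-a\in{\rm Nil}$ from non-units to units inside a single factor --- is real if you argue factor by factor, but it evaporates with the observation underlying Lemma \ref{2.4}: for \emph{any} $a_1\in R_1$ (unit or not), the element $(a_1,0)$ is a non-unit of $R\cong R_1\times R_2$ (both factors are nonzero here, since $2,3\notin U(R)$ in this case), hence strongly square-nil clean, hence $a_1$ is strongly square-nil clean in $R_1$; likewise for $R_2$. This makes each factor strongly square-nil clean in one stroke, with no need for \cite[Lemma 2.6]{zhou} or for re-running the argument of Theorem \ref{7}, and the product of strongly square-nil clean rings is strongly square-nil clean componentwise, which finishes the case $6\in{\rm Nil}(R)$. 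With the CRT step corrected and the factor argument replaced by this product trick, your proof becomes essentially the paper's.
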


\begin{proof}
(ii) $\Longrightarrow$ (i). This is routine.\\
(i) $\Longrightarrow$ (ii). Applying Lemma \ref{2.55}, we have that either $2 \in \text{Nil}(R)$ or $6 \in \text{Nil}(R)$. If, for a moment, $2 \in \text{Nil}(R)$, then $R$ is a GSNC ring using Lemma \ref{2.29}. However, if $6 \in \text{Nil}(R)$, we may decompose with the help of the Chinese Remainder Theorem $R \cong R_1 \oplus R_2$, where $2\in Nil(R_1)$ and $3\in Nil(R_2)$. Now, Lemma \ref{2.4} tells us that $R_1$ and $R_2$ are both strongly square-nil clean rings. Moreover, since $2 \in \text{Nil}(R_1)$, $R_1$ is even a strongly nil-clean ring. It thus follows that $R$ is a strongly square-nil clean ring, and hence $R$ itself is a strongly NUS-nil clean ring, as promised.
\end{proof}

The following claim is key.

\begin{lemma}\label{2.30}
Let $R$ be a ring. Then, the following issues are equivalent:
\begin{enumerate}
\item
$R$ is a strongly square-nil clean ring.
\item
$R$ is a strongly NUS-nil clean ring and, for every $u\in U(R)$, $u^2=1+n$, where $n\in Nil(R)$.
\end{enumerate}
\end{lemma}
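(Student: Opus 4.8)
The plan is to prove the two implications separately, using Theorem~\ref{7} as the bridge between the two ``clean-type'' conditions. The direction (i) $\Rightarrow$ (ii) should be the easy one: if $R$ is strongly square-nil clean then in particular it is strongly NUS-nil clean, so only the statement about units needs checking. Given $u \in U(R)$, I would apply the hypothesis to the element $u$ itself (or, more conveniently, to $u^2$, whichever element is not a unit — but units {\it are} square-nil clean here since {\it every} element is), writing $u = e + n$ with $e^4 = e^2$, $n \in \mathrm{Nil}(R)$, $en = ne$. Then $u^2 = e^2 + n(2e+n)$, and since $u$ is a unit while $e^2$ is a (genuine) idempotent commuting with the nilpotent $n$, one argues $e^2 = 1$: indeed $u = e + n$ invertible with $en=ne$ forces $e$ invertible modulo $\mathrm{Nil}(R)$, hence (as $e^2$ is idempotent) $e^2 = 1$. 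Therefore $u^2 = 1 + n(2e+n)$ with $n(2e+n) \in \mathrm{Nil}(R)$, which is exactly the desired form.

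For (ii) $\Rightarrow$ (i), the strategy is to show $a^3 - a \in \mathrm{Nil}(R)$ for {\it every} $a \in R$ and then invoke \cite[Theorem 2.12]{zhou} (as used in the proof of Lemma~\ref{2.26}) to conclude that $a = e + n$ with $e^3 = e$, $en = ne$; since $e^3 = e$ gives $e^4 = e^2$, this yields a strongly square-nil clean decomposition. If $a \notin U(R)$, Theorem~\ref{7} already gives $a^4 - a^2 \in \mathrm{Nil}(R)$, hence $a^3 - a = a(a^2-1) $ — wait, rather $a^4 - a^2 = a(a^3 - a)$, so one gets $a^2(a^2-1) \in \mathrm{Nil}(R)$ directly; to pass from $a^4 - a^2 \in \mathrm{Nil}(R)$ to $a^3 - a \in \mathrm{Nil}(R)$ for non-units is exactly the first line of the converse direction in the proof of Theorem~\ref{7}, so I would just cite that. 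The remaining and genuinely new case is $a \in U(R)$: here the extra hypothesis enters. For a unit $u$ we are told $u^2 = 1 + n$ with $n \in \mathrm{Nil}(R)$, so $u^2 - 1 \in \mathrm{Nil}(R)$, whence $u(u^2 - 1) = u^3 - u \in \mathrm{Nil}(R)$ as well. Combining the two cases, $a^3 - a \in \mathrm{Nil}(R)$ for all $a \in R$.

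I expect the main obstacle to be a hypothesis-tracking subtlety rather than a deep computation: \cite[Theorem 2.12]{zhou} (the ``$a^3 = a$'' lifting) may carry a standing assumption — e.g. that the ring has no restriction, or that it applies uniformly — and one must make sure it is legitimately invoked for {\it all} $a \in R$ simultaneously, not just for non-units; if that result is only stated for the whole ring under ``$a^3 - a$ nil for all $a$'', then the argument goes through cleanly, but if there are side conditions (like $2 \in U(R)$, as in \cite[Lemma 2.6]{zhou}) one would instead have to bifurcate along $2 \in U(R)$ versus $2 \notin U(R)$ and handle the latter by the $6 \in \mathrm{Nil}(R)$ / Chinese Remainder Theorem machinery already developed in Lemmas~\ref{2.55} and \ref{2.56}. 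The cleanest write-up would first establish $a^3 - a \in \mathrm{Nil}(R)$ for all $a$, then remark that a ring in which every element satisfies $x^3 - x \in \mathrm{Nil}(R)$ is precisely a strongly square-nil clean ring (this equivalence is implicitly what Lemma~\ref{2.26} and its citation furnish), closing the loop.
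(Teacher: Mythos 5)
Your proof is correct, and in the direction (ii) $\Rightarrow$ (i) it is actually more complete than the paper's own argument. For (i) $\Rightarrow$ (ii) the two proofs coincide in substance: the paper computes $u^4-u^2\in\mathrm{Nil}(R)$ from the decomposition $u=e+m$ and cancels the unit $u^2$ to obtain $1-u^2\in\mathrm{Nil}(R)$, whereas you observe directly that $e=u-m$ is a unit (a unit minus a commuting nilpotent; note $m$ commutes with $e$, hence with $u$), so the idempotent $e^2$ must equal $1$ and $u^2=1+m(2e+m)$ --- both are fine. For (ii) $\Rightarrow$ (i) the paper merely splits into cases: non-units carry a decomposition by hypothesis, and for a unit $u$ it writes $u^2=1+n$ and then asserts, with no further argument, that $u$ ``has a strongly square-nil clean decomposition''; the passage from $u^2\equiv 1$ modulo $\mathrm{Nil}(R)$ to an actual commuting decomposition $u=e+m$ with $e^4=e^2$ is left unexplained there. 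Your route --- first establish $a^3-a\in\mathrm{Nil}(R)$ for every $a\in R$ (non-units via Theorem~\ref{7}, units via $u^3-u=u(u^2-1)$) and then invoke \cite[Theorem 2.12]{zhou} --- supplies exactly the missing lifting step and treats both cases uniformly. Your caveat about possible side conditions in \cite[Theorem 2.12]{zhou} is sensible, but since Lemma~\ref{2.26} of the paper already uses that theorem in the identical way (from ``$a^3-a\in\mathrm{Nil}(R)$'' to a commuting tripotent-plus-nilpotent decomposition, with $e^3=e$ giving $e^4=e^2$), your invocation stands on the same footing as the paper's.
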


\begin{proof}
(i) $\Rightarrow$ (ii). It is readily to see that $R$ is strongly NUS-nil clean. Let $u\in U(R)$. Thus, $u=e+m$, where $e^2=e^4$ and $m\in Nil(R)$ with $em=me$. It follows further from Theorem~\ref{7} that $u^4-u^2\in Nil(R)$. So, $1-u^2\in Nil(R)$. So, there exists $n\in Nil(R)$ such that $u^2=1+n$, as desired.\\
(ii) $\Rightarrow$ (i). Assume that  $a\in R$. If $a\not\in U(R)$, then $a=f+q$, where $f^2=f^4$ and $q\in Nil(R)$ with $qf=fq$. If $a\in U(R)$, then $a^2=1+n$, where $n\in Nil(R)$. Hence, in either case, the element $a$ has strongly  square-nil clean decomposition, as wanted.
\end{proof}

We are now prepared to prove our principal result that sounds somewhat curious and is a partial converse of Lemma~\ref{2.26}.

\begin{theorem}\label{2.38}
Let $R$ be simultaneously a Noetherian local and strongly square nil clean ring. Then, ${\rm M}_2(R)$ is a strongly NUS-nil clean ring.
\end{theorem}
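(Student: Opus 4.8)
The plan is to exploit Theorem~\ref{7}: to prove ${\rm M}_2(R)$ is strongly NUS-nil clean it suffices to show that $A^4-A^2\in {\rm Nil}({\rm M}_2(R))$ for every non-invertible $A\in {\rm M}_2(R)$. Since $R$ is local with $J(R)$ nil (a local ring that is strongly square-nil clean has nil radical by Lemma~\ref{8} applied to $R$, or more directly because non-units of $R$ are nilpotent once we know $R$ is strongly NUS-nil clean via Corollary~\ref{2.50}; in fact a strongly square-nil clean local ring has $R/J(R)$ a division ring in which $u^4=u^2$ forces $u^2=1$), I would first reduce modulo the nil ideal ${\rm M}_2(J(R))$. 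Indeed ${\rm M}_2(J(R))$ is a nil ideal of ${\rm M}_2(R)$ (entries are nilpotent and commute "enough"—more carefully, $J({\rm M}_2(R))={\rm M}_2(J(R))$ is nil because $J(R)$ is nil and finitely generated nilpotence arguments over a Noetherian ring make the matrix ideal nil), and ${\rm M}_2(R)/{\rm M}_2(J(R))\cong {\rm M}_2(D)$ where $D=R/J(R)$ is a division ring. By Proposition~\ref{2.13}(i) it then suffices to prove ${\rm M}_2(D)$ is strongly NUS-nil clean for a division ring $D$ arising as the residue field of a strongly square-nil clean ring.

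Next I would analyze which division rings $D$ occur. From the strongly square-nil clean hypothesis on $R$, every unit $u$ of $R$ satisfies $u^2=1+n$ with $n\in {\rm Nil}(R)$ (Lemma~\ref{2.30}), so in $D$ every nonzero element squares to $1$; hence $D$ is a division ring in which $x^2=1$ for all $x\neq 0$, forcing $D=\mathbb{F}_2$ or $D=\mathbb{F}_3$. So the whole problem collapses to: \emph{${\rm M}_2(\mathbb{F}_2)$ and ${\rm M}_2(\mathbb{F}_3)$ are strongly NUS-nil clean.} For ${\rm M}_2(\mathbb{F}_2)$ this is exactly Example~\ref{2.24}, where it is observed that ${\rm M}_2(\mathbb{F}_2)=U\cup {\rm Id}\cup {\rm Nil}$. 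For ${\rm M}_2(\mathbb{F}_3)$, which was already asserted to be strongly NUS-nil clean in the discussion after Corollary~\ref{2.3}, I would verify by Theorem~\ref{7} that every non-invertible $A\in {\rm M}_2(\mathbb{F}_3)$ satisfies $A^4=A^2$: a singular $2\times 2$ matrix over $\mathbb{F}_3$ has characteristic polynomial $t^2-({\rm tr}A)t$, so $A^2=({\rm tr}A)A$, whence $A^4=({\rm tr}A)^2A^2=({\rm tr}A)^2({\rm tr}A)A$; since $({\rm tr}A)^2\in\{0,1\}$ in $\mathbb{F}_3$ we get $A^2=A^4$ in every case (if ${\rm tr}A=0$ then $A^2=0$; if ${\rm tr}A=\pm1$ then $({\rm tr}A)^2=1$ and it is immediate). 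Thus $A^4-A^2=0\in {\rm Nil}$.

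The one genuine obstacle is the reduction step itself: the lift of the decomposition. Theorem~\ref{7} conveniently sidesteps needing to actually lift idempotents—it only asks for $A^4-A^2$ nilpotent—so the real content is (a) showing $J({\rm M}_2(R))={\rm M}_2(J(R))$ is \emph{nil}, and (b) confirming that non-invertibility of $A$ in ${\rm M}_2(R)$ corresponds to non-invertibility of $\bar A$ in ${\rm M}_2(D)$. Point (b) is clean: $A$ is a unit in ${\rm M}_2(R)$ iff $\bar A$ is a unit in ${\rm M}_2(D)$, since ${\rm M}_2(J(R))\subseteq J({\rm M}_2(R))$. Point (a) is where Noetherianity is used: $J(R)$ is a nil ideal in a Noetherian ring, hence nilpotent (Levitzki), say $J(R)^k=0$; then ${\rm M}_2(J(R))^k\subseteq {\rm M}_2(J(R)^k)=0$, so ${\rm M}_2(J(R))$ is nilpotent, a fortiori nil. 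Combining: for non-invertible $A$, Theorem~\ref{7} (applied to the already-handled ${\rm M}_2(D)$) gives $\bar A^4-\bar A^2$ nilpotent in ${\rm M}_2(D)$, so $(A^4-A^2)^m\in {\rm M}_2(J(R))$ for some $m$, and then a further power lands in $0$; hence $A^4-A^2\in {\rm Nil}({\rm M}_2(R))$, and Theorem~\ref{7} finishes. Thus the local hypothesis forces the residue field into $\{\mathbb{F}_2,\mathbb{F}_3\}$ and Noetherianity makes the matrix radical nilpotent; everything else is the $2\times2$ Cayley–Hamilton computation above.
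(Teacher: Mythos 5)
Your proof is correct and follows essentially the same route as the paper: reduce modulo the nilpotent ideal ${\rm M}_2(J(R))$ (Noetherian plus Levitzki), identify $R/J(R)$ as $\mathbb{F}_2$ or $\mathbb{F}_3$, treat those two matrix rings, and lift via Proposition~\ref{2.13}. If anything, your version is tighter: you reach $R/J(R)\in\{\mathbb{F}_2,\mathbb{F}_3\}$ directly from Lemma~\ref{2.30}, whereas the paper shows $6\in{\rm Nil}(R)$ and invokes a CRT splitting that is vacuous for a local ring and only pins down the characteristic, and you actually verify the ${\rm M}_2(\mathbb{F}_3)$ case by the Cayley--Hamilton computation, which the paper leaves as an unproved assertion.
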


\begin{proof}
Firstly, we show that $6\in Nil(R)$. If, for a moment, $2\not\in U(R)$, then Lemma \ref{2.55} employs to get either $2\in Nil(R)$ or $6\in Nil(R)$. So, in this case, we have $6\in Nil(R)$, because $2\in Nil(R)$ would imply that $6\in Nil(R)$.\\
If $2\in U(R)$, then by hypothesis there exist $e^2=e^4\in R$ and $n\in Nil(R)$ such that $2=e+n$ and $en=ne$. So, $$2^3-2=e^3-e+n(3e^2+3en+n^2-1)\in Nil(R).$$ Therefore, $6\in Nil(R)$ in either case.\\ Thus, one can decompose with the aid of the Chines Remainder Theorem $R\simeq R_1 \times  R_2$, where $2\in Nil(R_1)$ and $3\in Nil(R_2)$. But, since $R$ is local, we have either $R/J(R) \cong \mathbb{Z}_2$ or $R/J(R) \cong \mathbb{Z}_3$. So, ${\rm M}_2(R/J(R))$ is a strongly NUS-nil clean ring. However, we know that ${\rm M}_2(R/J(R)) = {\rm M}_2(R)/{\rm M}_2(J(R))$, and $J(R)$ is a nilpotent ideal of $R$, because $R$ is Noetherian. Consequently, one knows that ${\rm M}_2(J(R))$ is a nilpotent ideal of ${\rm M}_2(R)$. Finally, Proposition \ref{2.13} leads to the fact that ${\rm M}_2(R)$ is a strongly NUS-nil clean ring, as asked.
\end{proof}

The next assertion also sounds somewhat surprisingly.

\begin{lemma}\label{2.49}
Let $R$ be a strongly NUS-nil clean ring with $2 \in U(R)$ and, for any $u \in U(R)$, we have $u^2 = 1$. Then, $R$ is a commutative ring.
\end{lemma}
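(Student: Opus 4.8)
The plan is to prove that, under these hypotheses, $R$ must be reduced; once that is in hand, the ring satisfies $x^{3}=x$ identically, and commutativity is then a classical fact. So the whole argument is really one small observation followed by two routine deductions.

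\emph{Step 1 (the key point): ${\rm Nil}(R)=0$.} Suppose toward a contradiction that $R$ has a nonzero nilpotent element; raising it to a suitable power, I obtain $n\in R$ with $n\neq 0$ and $n^{2}=0$. Then $1+n$ is a unit of $R$, with inverse $1-n$, so by hypothesis $(1+n)^{2}=1$. Expanding, $1+2n+n^{2}=1$, that is $2n=0$; since $2\in U(R)$, this gives $n=0$, a contradiction. Hence $R$ is reduced. This is exactly the place where both assumptions --- that every unit is an involution and that $2$ is invertible --- are used together, and I view it as the crux of the proof; I do not anticipate any real obstacle elsewhere.

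\emph{Step 2: $a^{3}=a$ for every $a\in R$.} If $a\in U(R)$, then $a^{2}=1$, so $a^{3}=a$. If $a\notin U(R)$, then Theorem~\ref{7} gives $a^{4}-a^{2}\in{\rm Nil}(R)=0$, hence $(a^{3}-a)^{2}=(a^{2}-1)(a^{4}-a^{2})=0$, and reducedness forces $a^{3}=a$. (Equivalently, one may bypass Theorem~\ref{7}: a non-unit $a$ has the form $e+m$ with $e^{4}=e^{2}$ and $m\in{\rm Nil}(R)=0$, so $a=e$ and $a^{3}=a$ follows as above.)

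\emph{Step 3: conclusion.} A ring in which every element $x$ satisfies $x^{n}=x$ for a single fixed integer $n\geq 2$ is commutative --- this is Jacobson's commutativity theorem. Applying it with $n=3$ shows that $R$ is commutative, as claimed.
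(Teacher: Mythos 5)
Your proof is correct, but it takes a genuinely different route from the paper's. You first show that $R$ is reduced: after the standard reduction to a square-zero element $m$, the hypothesis $(1+m)^2=1$ gives $2m=0$, hence $m=0$ since $2\in U(R)$ --- this step is sound. You then deduce the identity $a^3=a$ for every $a$, using $u^2=1$ on units and Theorem~\ref{7} together with reducedness on non-units, and finally invoke Jacobson's commutativity theorem in its fixed-exponent form $x^3=x$. The paper never establishes reducedness; instead it argues structurally, showing that units commute with one another, that idempotents are central (so $R$ is abelian), that nilpotents commute because $1+\mathrm{Nil}(R)\subseteq U(R)$, and then decomposing arbitrary elements as sums of two idempotents and a nilpotent via a cited result of Zhou, so that $xy=yx$ follows term by term. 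Your argument is shorter and delivers a stronger structural conclusion --- $R$ is reduced and satisfies $x^3=x$, hence (since $4=1$ forces $3=0$) is a subdirect product of copies of $\mathbb{Z}_3$ --- at the price of outsourcing the last step to a classical commutativity theorem; the paper's version stays closer to the nil-clean machinery it has built but is longer and leans on an external decomposition result instead.
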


\begin{proof}
For all $u, v \in U(R)$, we have $u^2 = v^2 = (uv)^2 = 1$. Therefore, $uv = (uv)^{-1} = v^{-1}u^{-1} = vu$. Hence, the invertible elements commute with each other.\\	
Now, we will illustrate that $R$ is abelian. In fact, for each idempotent $e$ in $R$ and $a \in R$, we know $2e-1\in U(R)$ and $(1+ea(1-e)) \in U(R)$. Since, by what we have shown above, the invertible elements commute with each other, we have $2ea(1-e) =0$. Since $2 \in U(R)$, we get $ea(1-e)=0$, which means $ea = eae$.\\
On the other hand, since $2e-1\in U(R)$ and $(1+(1-e)ae) \in U(R)$, we obtain $2(1-e)ae =0$. So, $ae = eae$. Therefore, $R$ is abelian, as claimed.\\	
On the other side, since $1 + \text{Nil}(R) \subseteq U(R)$ and all invertible elements commute with each other, the nilpotent elements also commute with each other.\\	
On the other hand, since $2 \in U(R)$ and, for every $u \in U(R)$, $u^2 = 1$, we receive $2^2=1$. It now follows that $3\in Nil(R)$. Next, we demonstrate that $a^3-a\in Nil(R)$ for any $a\in R$. To this purpose, choose $a\in R$. If $a\in U(R)$, then $a^2=1$ and so $a^3-a=0$.\\
If, however, $a\not\in U(R)$, then Theorem \ref{7} informs us that $a^4-a^2\in Nil(R)$. So, $a^3-a\in Nil(R)$.\\
Now, given $x, y \in R$. Consulting with \cite[Proposition 2.8]{zhou}, one writes that $x=e_1+e_2+m$ and $y=f_1+f_2+n$, where $e_1,e_2,f_1,f_2\in Id(R)$ and $m,n\in Nil(R)$. As all idempotents are known by the established above to be central in $R$, and nilpotent elements are commuting with each other, we finally conclude that $xy=yx$. Thus, $R$ is a commutative ring, indeed.
\end{proof}

Let $A$, $B$ be two rings and let $M$, $N$ be $(A,B)$-bi-module and $(B,A)$-bi-module, respectively. Moreover, we consider the bilinear maps $\phi :M\otimes_{B}N\rightarrow A$ and $\psi:N\otimes_{A}M\rightarrow B$ that apply to the following properties:
$$Id_{M}\otimes_{B}\psi =\phi \otimes_{A}Id_{M},Id_{N}\otimes_{A}\phi =\psi \otimes_{B}Id_{N}.$$
For $m\in M$ and $n\in N$, define $mn:=\phi (m\otimes n)$ and $nm:=\psi (n\otimes m)$. Now, the $4$-tuple $R=\begin{pmatrix}
	A & M\\
	N & B
\end{pmatrix}$ becomes to an associative ring with obvious matrix operations that is called a {\it Morita context ring}. Designate the two-sided ideals $Im \phi$ and $Im \psi$ to $MN$ and $NM$, respectively, that are called the {\it trace ideals} of the Morita context.

\medskip

We now have at our disposal all the instruments necessary to prove the following criterion.

\begin{proposition}\label{2.41}
Let $R=\left(\begin{array}{ll}A & M \\ N & B\end{array}\right)$ be a Morita context ring such that $MN$ and $NM$ are nilpotent ideals of $A$ and $B$, respectively. Then, $R$ is a strongly NUS-nil clean ring if, and only if, both $A$ and $B$ are strongly square-nil clean rings.
\end{proposition}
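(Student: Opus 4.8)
The plan is to reduce the Morita context ring statement to the already-proven trivial-extension and quotient results. The key structural observation is that if $MN$ and $NM$ are nilpotent ideals, then the ideal
$$I=\begin{pmatrix} MN & M \\ N & NM \end{pmatrix}$$
is a nil-ideal of $R$, and the quotient $R/I$ is isomorphic to $(A/MN)\times (B/NM)$. This is because, modulo $I$, the off-diagonal entries vanish and the diagonal blocks reduce mod their trace ideals; one checks $I$ is nil by noting that a suitable power of any element of $I$ lands in $\begin{pmatrix} MN & 0 \\ 0 & NM\end{pmatrix}$ (using that products of off-diagonal pieces land in the trace ideals) and then the nilpotence of $MN$ and $NM$ finishes it. With this in hand, Proposition~\ref{2.13}(i) says $R$ is strongly NUS-nil clean if and only if $R/I\cong (A/MN)\times(B/NM)$ is strongly NUS-nil clean.

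Next I would apply Lemma~\ref{2.4}: the direct product $(A/MN)\times(B/NM)$ is strongly NUS-nil clean if and only if each factor $A/MN$ and $B/NM$ is strongly \emph{square}-nil clean. Finally, since $MN$ is a nilpotent (hence nil) ideal of $A$, one needs the fact that $A/MN$ is strongly square-nil clean if and only if $A$ is strongly square-nil clean; the analogous statement for $B$ and $NM$ completes the chain of equivalences. This last reduction is the strongly-square-nil-clean analogue of Proposition~\ref{2.13}(i), and follows the same pattern: a square-nil clean decomposition lifts along a nil-ideal because if $a^3-a$ (equivalently a suitable polynomial identity pinning down the square-idempotent part) holds modulo a nil ideal, it holds in $A$ after adjusting by a nilpotent, invoking \cite[Lemma 2.6]{zhou} or \cite[Theorem 2.12]{zhou} as in the proof of Theorem~\ref{7}. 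Actually, the cleanest route is: $A$ strongly square-nil clean $\Rightarrow$ $A/MN$ strongly square-nil clean is immediate (homomorphic image of the decomposition), and the converse uses that $MN$ is nil together with the characterization that a ring $S$ is strongly square-nil clean iff every element $a$ satisfies either a square-idempotent-plus-nilpotent decomposition, handled exactly as in Lemma~\ref{2.30} and the proof technique of Theorem~\ref{7}.

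Putting the three equivalences together: $R$ strongly NUS-nil clean $\iff$ $(A/MN)\times(B/NM)$ strongly NUS-nil clean $\iff$ $A/MN$ and $B/NM$ both strongly square-nil clean $\iff$ $A$ and $B$ both strongly square-nil clean, which is the assertion.

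The main obstacle I anticipate is the verification that $I$ is genuinely a nil-ideal and that $R/I\cong (A/MN)\times(B/NM)$ as rings. The nilpotence requires care because elements of $I$ mix diagonal and off-diagonal parts, so one must track how high a power is needed: roughly, if $(MN)^k=0$ and $(NM)^k=0$, then squaring an element of $I$ pushes the off-diagonal entries into the trace ideals, and after boundedly many further multiplications the whole matrix lies in $\begin{pmatrix}(MN)^k & (MN)^{k-1}M \\ N(MN)^{k-1} & (NM)^k\end{pmatrix}$-type blocks which then vanish — this bookkeeping, while routine, is where an error could slip in. The ring isomorphism $R/I\cong (A/MN)\times(B/NM)$ itself is straightforward once one observes that the map sending a matrix to its pair of diagonal entries (reduced mod trace ideals) is a surjective ring homomorphism with kernel exactly $I$; the multiplicativity check uses precisely that the cross terms $mn$ and $nm$ land in $MN$ and $NM$.
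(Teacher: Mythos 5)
Your proposal is correct, and it takes a genuinely different route from the paper. The paper does not build the explicit ideal $I=\left(\begin{smallmatrix} MN & M \\ N & NM \end{smallmatrix}\right)$; instead it observes that $MN\subseteq J(A)$ and $NM\subseteq J(B)$, invokes the known description $J(R)=\left(\begin{smallmatrix} J(A) & M \\ N & J(B) \end{smallmatrix}\right)$ of the radical of a Morita context ring (citing Tang--Li--Zhou), obtains $R/J(R)\cong A/J(A)\times B/J(B)$, and then combines Lemma \ref{2.4} with Lemma \ref{8} and Proposition \ref{2.13}(ii). Your version replaces the appeal to the literature by a direct computation: one checks, exactly as you sketch, that $I$ is an ideal with $I^{2k+2}=0$ when $(MN)^k=(NM)^k=0$ (the power $I^{2}$ already lands in $\left(\begin{smallmatrix} MN & (MN)M \\ (NM)N & NM \end{smallmatrix}\right)$ and each further multiplication raises the exponent on the trace ideals), and that the diagonal-reduction map gives $R/I\cong (A/MN)\times(B/NM)$; then Proposition \ref{2.13}(i) and Lemma \ref{2.4} finish the reduction. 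What your approach buys is self-containment and the fact that the nilpotence of the relevant ideal is immediate from the hypotheses, rather than routed through the assertion that $J(R)$ is nil; what the paper's approach buys is brevity, since the radical formula is quoted rather than verified. Note that both arguments ultimately rest on the same auxiliary fact that being strongly square-nil clean passes in both directions across a nil ideal (to get from $A/MN$, respectively $A/J(A)$, back to $A$); the paper glosses this by citing Proposition \ref{2.13}, which is literally about the NUS-nil clean property, whereas you correctly flag it as a separate step and indicate how the element-wise characterization in the style of Theorem \ref{7} supplies it. So your write-up is, if anything, more honest about that point; just make sure the final version actually records and proves that lifting lemma rather than leaving it as a sketch.
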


\begin{proof}
Since, $MN$ and $NM$ are nilpotent ideals of $A$ and $B$, respectively, one can says that $MN \subseteq J(A)$ and $NM\subseteq J(B)$. Therefore, addapting \cite{9}, we argue that $$J(R)=\begin{pmatrix}
		J(A) & M \\
		N & J(B)
\end{pmatrix}$$ and hence the isomorphism $$\dfrac{R}{J(R)}\cong \dfrac{A}{J(A)}\times \dfrac{B}{J(B)}$$ is fulfilled. Notice that $R$ is a strongly NUS-nil clean ring if, and only if, so is the factor-ring $R/J(R)$. 

Furthermore, thanking to Lemma \ref{2.4}, the quotient $R/J(R)$ is strongly NUS-nil clean ring if, and only if, $\dfrac{A}{J(A)}$ and $\dfrac{B}{J(B)}$ are both strongly square-nil clean rings.\\
Now, if $R$ is strongly NUS-nil clean, then by what we have established so far $J(R)$ is a nil-ideal and so $J(A)$ and $J(B)$ are nil as well. Observe also that Proposition \ref{2.13} can be applied we get that both $A$ and $B$ are strongly square-nil clean rings, as stated.\\ 

Conversely, if $A$ and $B$ are strongly square-nil clean rings, then both $J(A)$ and $J(B)$ are nil, and besides $\dfrac{A}{J(A)}$ and $\dfrac{B}{J(B)}$ are strongly square-nil clean rings yielding that $R/J(R)$ is a strongly NUS-nil clean ring. But, as $J(A)$ and $J(B)$ are nil, we elementarily deduce that $J(R)$ is a nil-ideal of $R$. It now follows that $R$ is a strongly NUS-nil clean ring, as formulated.
\end{proof}

Next, let $R$, $S$ be two rings, and let $M$ be an $(R,S)$-bi-module such that the operation $(rm)s = r(ms$) is valid for all $r \in R$, $m \in M$ and $s \in S$. Given such a bi-module $M$, we consider the {\it formal triangular matrix ring}

$$
{\rm T}(R, S, M) =
\begin{pmatrix}
	R& M \\
	0& S
\end{pmatrix}
=
\left\{
\begin{pmatrix}
	r& m \\
	0& s
\end{pmatrix}
: r \in R, m \in M, s \in S
\right\},
$$
where it obviously forms a ring with the usual matrix operations. Regarding Proposition \ref{2.41}, if we set $N =\{0\}$, then we will obtain the following immediate consequence.

\begin{corollary}\label{2.42}
Let $R,S$ be rings and let $M$ be an $(R,S)$-bi-module. Then, ${\rm T}(R,S,M)$ is strongly NUS-nil clean ring if, and only if, both $R$, $S$ are strongly square-nil clean rings.
\end{corollary}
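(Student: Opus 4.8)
The plan is to obtain the corollary directly from Proposition~\ref{2.41} by specializing the Morita context to the case where the lower-left bimodule is zero. Concretely, I would put $A=R$, $B=S$, keep the $(R,S)$-bimodule $M$, and take $N=\{0\}$ together with the (necessarily) zero bilinear maps $\phi\colon M\otimes_{S}N\to R$ and $\psi\colon N\otimes_{R}M\to S$. First I would note that the required compatibility identities $Id_{M}\otimes_{S}\psi=\phi\otimes_{R}Id_{M}$ and $Id_{N}\otimes_{R}\phi=\psi\otimes_{S}Id_{N}$ hold vacuously, since every tensor module involving $N$ is the zero module, and that the resulting Morita context ring has as underlying set the matrices $\left(\begin{matrix}r & m\\ 0 & s\end{matrix}\right)$ with $r\in R$, $m\in M$, $s\in S$, whose multiplication is governed by $mn=0$ and $nm=0$ and hence reduces to $(r,m,s)(r',m',s')=(rr',\,rm'+ms',\,ss')$. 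That is, this Morita context ring is literally ${\rm T}(R,S,M)$.

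Next I would verify the hypotheses of Proposition~\ref{2.41} in this setting: since $N=\{0\}$, the trace ideals are $MN={\rm Im}\,\phi=0$ as an ideal of $R$ and $NM={\rm Im}\,\psi=0$ as an ideal of $S$. The zero ideal is trivially nilpotent, so $MN$ and $NM$ are nilpotent ideals of $R$ and $S$ respectively, which is exactly the hypothesis of Proposition~\ref{2.41}. Invoking that proposition then yields at once that ${\rm T}(R,S,M)$ is a strongly NUS-nil clean ring if, and only if, both $R$ and $S$ are strongly square-nil clean rings, as claimed.

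I do not anticipate any real obstacle here: the whole argument is the identification --- already flagged in the sentence preceding the statement --- of ${\rm T}(R,S,M)$ with the $N=\{0\}$ Morita context ring, followed by the harmless remark that the zero ideal is nilpotent so that Proposition~\ref{2.41} applies. The only point meriting a line of care is the bookkeeping check that the general Morita-context multiplication collapses to the upper-triangular matrix multiplication when $N=\{0\}$; once that is in place, the corollary is immediate.
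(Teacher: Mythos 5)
Your proposal is correct and is exactly the paper's argument: the paper derives Corollary~\ref{2.42} by setting $N=\{0\}$ in Proposition~\ref{2.41}, so that the trace ideals $MN$ and $NM$ are zero (hence nilpotent) and the Morita context ring coincides with ${\rm T}(R,S,M)$. Your additional verification of the compatibility identities and the collapse of the multiplication is careful bookkeeping the paper leaves implicit, but the route is the same.
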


We now deal with group ring extensions of the NUS-nil clean property as follows: let $R$ be a ring, $G$ a group, and we traditionally denote by $RG$ the group ring of $G$ over $R$. 

In this section, we intend to establish a suitable criterion for a group ring to be strongly NUS-nil clean under some sensible circumstances on the former group and ring objects. Concretely, we are able to achieve this, provided the whole group is locally finite. Recall that a group $G$ is a $p$-group if every element of $G$ has order which is a power of the prime number $p$. Also, we recollect that a group is locally finite if each its finitely generated subgroup is finite.

Suppose now that $G$ is an arbitrary group and $R$ is an arbitrary ring. Standardly, $RG$ the homomorphism $\varepsilon :RG\rightarrow R$ defined by $\varepsilon (\displaystyle\sum_{g\in G}a_{g}g)=\displaystyle\sum_{g\in G}a_{g}$ is called the {\it augmentation map} of $RG$ and its kernel, denoted by $\Delta (RG)$, is called the {\it augmentation ideal} of $RG$.

\medskip

We now proceed by showing the following assertions.

\begin{lemma} \label{3.1}
If $RG$ is a strongly NUS-nil clean ring ring, then so is $R$.
\end{lemma}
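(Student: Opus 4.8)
The statement to prove is that if the group ring $RG$ is strongly NUS-nil clean, then so is $R$. The natural strategy is to realize $R$ as a quotient of $RG$ by a nil ideal and then invoke Proposition~\ref{2.13}(i). The obvious candidate is the augmentation ideal $\Delta(RG)$, since $RG/\Delta(RG) \cong R$ via the augmentation map $\varepsilon$. The difficulty, however, is that $\Delta(RG)$ is in general \emph{not} nil (for an infinite group it need not even be nilpotent), so Proposition~\ref{2.13}(i) does not apply directly. So I would instead argue through the criterion of Theorem~\ref{7}: take $a \in R$ with $a \notin U(R)$, view $a$ as the element $a \cdot 1_G \in RG$, check it is a non-unit of $RG$, apply Theorem~\ref{7} inside $RG$ to get $a^4 - a^2 \in \mathrm{Nil}(RG)$, and then push this back down to $R$.

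\textbf{Key steps.} First I would verify that if $a \in R$ is a non-unit in $R$, then $a \cdot 1_G$ is a non-unit in $RG$: indeed, applying the augmentation map $\varepsilon$ to a hypothetical inverse would yield an inverse of $a$ in $R$, a contradiction (this is exactly the argument pattern used in Lemma~\ref{2.14}). Second, Theorem~\ref{7} applied to the strongly NUS-nil clean ring $RG$ gives $a^4 - a^2 = (a\cdot 1_G)^4 - (a \cdot 1_G)^2 \in \mathrm{Nil}(RG)$, say $(a^4 - a^2)^m = 0$ in $RG$ for some $m$. Third, since $R$ embeds in $RG$ (as the coefficient ring, via $r \mapsto r \cdot 1_G$) and $(a^4-a^2)^m$ already lies in that copy of $R$, the identity $(a^4-a^2)^m = 0$ holds in $R$; hence $a^4 - a^2 \in \mathrm{Nil}(R)$. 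Fourth, since $a \notin U(R)$ was arbitrary, Theorem~\ref{7} now applies in the reverse direction to conclude that $R$ is strongly NUS-nil clean.

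\textbf{Main obstacle.} The only subtle point is the second step — making sure the hypothesis of Theorem~\ref{7} is actually available for the specific element $a\cdot 1_G$, which requires knowing it is a non-unit of $RG$; everything else is bookkeeping about the ring embedding $R \hookrightarrow RG$. An equally clean alternative, if one prefers to cite Proposition~\ref{2.13}, is to note that $R$ is isomorphic to the corner ring $1_G \cdot RG \cdot 1_G = R$ — but that is circular — so the cleaner route really is to argue via the non-unit criterion of Theorem~\ref{7} as above rather than trying to force $\Delta(RG)$ to be nil.
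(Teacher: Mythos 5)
Your proof is correct, but it runs in the opposite direction from the paper's. The paper treats $R$ as a \emph{quotient} of $RG$: since $RG/\Delta(RG)\cong R$, it concludes that $R$ inherits the property as an epimorphic image of $RG$ (the justification being the forward direction of the argument in Proposition~\ref{2.13}(i), which, via Theorem~\ref{7}, never actually uses nilness of the ideal: if $\bar a\notin U(R/I)$ then $a\notin U(R)$, so $a^4-a^2\in{\rm Nil}(R)$ and hence $\bar a^4-\bar a^2\in{\rm Nil}(R/I)$). You instead treat $R$ as a \emph{subring} of $RG$ via $r\mapsto r\cdot 1_G$, use the augmentation map to show a non-unit of $R$ stays a non-unit of $RG$, apply Theorem~\ref{7} inside $RG$, and pull the nilpotency relation back through the injective embedding --- exactly the pattern of the corner-ring Lemma~\ref{2.14}. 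Both arguments are sound and of comparable length. Your route has the merit of being completely explicit about why Theorem~\ref{7} applies (the non-unit check), and your remark that $\Delta(RG)$ need not be nil correctly identifies why Proposition~\ref{2.13}(i) cannot be cited as stated; the paper sidesteps this by invoking preservation under epimorphic images, a fact it uses without having isolated it as a lemma. If anything, your version is the more self-contained of the two.
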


\begin{proof}
We know that $RG/\Delta(RG) \cong R$. Thus, it follows at once that $R$ must be a strongly NUS-nil clean ring as being an epimorphic image of $RG$.
\end{proof}

\begin{proposition}\label{3.2}
Let $R$ be a strongly NUS-nil clean ring with $p \in {\rm Nil}(R)$, and let $G$ be a locally finite $p$-group, where $p$ is a prime. Then, the group ring $RG$ is a strongly NUS-nil clean ring.
\end{proposition}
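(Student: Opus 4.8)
The strategy is to show that the augmentation ideal $\Delta(RG)$ is a \emph{nil} ideal of $RG$; granting this, since $RG/\Delta(RG)\cong R$ is strongly NUS-nil clean by hypothesis, Proposition~\ref{2.13}(i) immediately yields that $RG$ is strongly NUS-nil clean. So the whole proof reduces to a single claim about augmentation ideals.

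First I would reduce the claim to finite groups. An element $x=\sum_{g\in G}a_{g}g\in\Delta(RG)$ has finite support, so $x$ lies in the subring $RH$, where $H$ is the subgroup of $G$ generated by $\operatorname{supp}(x)$; since $G$ is locally finite, $H$ is a finite $p$-group, and $\varepsilon(x)=0$ forces $x\in\Delta(RH)$. Hence it suffices to prove that $\Delta(RH)$ is a \emph{nilpotent} ideal for every finite $p$-group $H$: then every element of the two-sided ideal $\Delta(RG)$ is nilpotent, i.e. $\Delta(RG)$ is nil.

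The core step is to prove, by induction on $|H|$, that $\Delta(RH)$ is nilpotent for a finite $p$-group $H$. The case $H=1$ is trivial. For the inductive step, note first that $p\in{\rm Nil}(R)$ gives $p^{k}\cdot 1_{R}=0$ for some $k\in\mathbb{N}$. As $H\neq 1$ is a finite $p$-group, its centre contains an element $z$ of order $p$; put $Z=\langle z\rangle$. In $RZ$ the element $z-1$ is central, and since $z^{p}=1$ a standard binomial computation gives $(z-1)^{p}\in p\cdot RZ$ (for $p=2$ one has directly $(z-1)^{2}=-2(z-1)$), so $(z-1)^{pk}\in p^{k}RZ=0$. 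Because $Z$ is central in $H$, the element $z-1$ is central in $RH$ as well, hence $(z-1)RH=RH(z-1)$ is a two-sided ideal with $\bigl((z-1)RH\bigr)^{pk}=(z-1)^{pk}RH=0$. Moreover $RH/(z-1)RH\cong R(H/Z)$ and the augmentation map of $RH$ factors through this quotient, so the image of $\Delta(RH)$ in $R(H/Z)$ is exactly $\Delta(R(H/Z))$, which is nilpotent by the induction hypothesis since $|H/Z|<|H|$. Therefore $\Delta(RH)^{N}\subseteq(z-1)RH$ for a suitable $N$, and consequently $\Delta(RH)^{Npk}=0$.

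Combining the two reductions shows that $\Delta(RG)$ is a nil ideal of $RG$, and then Proposition~\ref{2.13}(i), applied to the isomorphism $RG/\Delta(RG)\cong R$, completes the argument. The main obstacle is the inductive step: it requires the group-theoretic input that a nontrivial finite $p$-group possesses a central subgroup of order $p$, the binomial identity making $z-1$ nilpotent modulo $p$, and the bookkeeping that $(z-1)RH$ is genuinely a nilpotent two-sided ideal of $RH$ with quotient $R(H/Z)$, which is what allows the induction on $|H|$ to proceed; everything else is a formal consequence of the already established Proposition~\ref{2.13}.
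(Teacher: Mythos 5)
Your proposal is correct and follows exactly the paper's strategy: show that $\Delta(RG)$ is a nil ideal and then invoke Proposition~\ref{2.13}(i) via the isomorphism $RG/\Delta(RG)\cong R$. The only difference is that the paper simply cites Connell \cite[Proposition~16]{25} for the nilness of the augmentation ideal, whereas you reprove that fact from scratch (reduction to finite $p$-groups by local finiteness, then induction on $|H|$ using a central element $z$ of order $p$ and the congruence $(z-1)^{p}\in pRZ$); your self-contained argument is sound, so the proposal is a complete proof that merely internalizes the cited ingredient.
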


\begin{proof}
Knowing \cite[Proposition 16]{25}, we see that $\Delta(RG)$ is a nil-ideal. Thus, since $RG/\Delta(RG) \cong R$, referring to Proposition \ref{2.13} we infer that $RG$ is a strongly NUS-nil clean ring.
\end{proof}

We also record the following interesting fact.

\begin{lemma}\label{3.7} \cite[Lemma $2$]{27}.
Let $p$ be a prime number with $p\in J(R)$. If $G$ is a locally finite $p$-group, then $\Delta(RG) \subseteq J(RG)$.
\end{lemma}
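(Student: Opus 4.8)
\emph{The plan is as follows.} The idea is to reduce everything to finite subgroups of $G$ and then invoke the classical nilpotency of the augmentation ideal of a finite $p$-group. Since $G$ is locally finite, any $x\in RG$ has finite support and hence lies in $RH$ for some finitely generated—thus finite—subgroup $H\le G$, and every such $H$ is itself a finite $p$-group; consequently $\Delta(RG)=\bigcup_H\Delta(RH)$, the union running over all finite subgroups $H$ of $G$. Recalling that for a two-sided ideal $I$ one has $I\subseteq J(RG)$ precisely when $1+I\subseteq U(RG)$, and that $U(RH)\subseteq U(RG)$ for any subring $RH\subseteq RG$, it suffices to prove the finite-group version: \emph{if $H$ is a finite $p$-group and $p\in J(R)$, then $\Delta(RH)\subseteq J(RH)$}. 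Granting this, every $x\in\Delta(RG)$ lies in some $\Delta(RH)\subseteq J(RH)$, so $1+x\in U(RH)\subseteq U(RG)$; hence $1+\Delta(RG)\subseteq U(RG)$, and therefore $\Delta(RG)\subseteq J(RG)$.

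For the finite case I would build the chain $\Delta(RH)^m\subseteq pRH\subseteq J(R)\cdot RH\subseteq J(RH)$ for a suitable $m$. For the first inclusion, recall that $\Delta(\mathbb{Z}_pH)$ is nilpotent, say $\Delta(\mathbb{Z}_pH)^m=0$; unwound over $\mathbb{Z}$ this says that for all $h_1,\dots,h_m\in H$ every coefficient of $(h_1-1)\cdots(h_m-1)\in\mathbb{Z}H$ is divisible by $p$, so this element lies in $pRH$. Since $\Delta(RH)$ is the left $R$-span of the elements $h-1$ and since $R$ centralizes $H$ inside $RH$, any product $r_1(h_1-1)\cdots r_m(h_m-1)$ of $m$ elements of $\Delta(RH)$ equals $(r_1\cdots r_m)(h_1-1)\cdots(h_m-1)$ and so lies in $pRH\subseteq J(R)\cdot RH$ because $p\in J(R)$. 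For $J(R)\cdot RH\subseteq J(RH)$, I would show that $J(R)$ annihilates every simple left $RH$-module $M$: indeed $J(R)M$ is an $RH$-submodule of $M$ (using that $J(R)$ is a two-sided ideal of $R$ and that $R$ and $H$ commute), so $J(R)M=0$ or $M$, and the latter is impossible because $M$ is finitely generated as a left $R$-module ($H$ finite, $M$ cyclic over $RH$), whence Nakayama's lemma in its noncommutative form forces $M=0$. Finally, $\Delta(RH)^m\subseteq J(RH)$ yields $\Delta(RH)\subseteq J(RH)$ since the image of $\Delta(RH)$ in the semiprimitive ring $RH/J(RH)$ is a nil ideal, hence zero.

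The step I expect to be the genuine pitfall is the globalization: it would be wrong to conclude $x\in\Delta(RH)\subseteq J(RH)$ gives $x\in J(RG)$, since $J(RH)\subseteq J(RG)$ need not hold for an overgroup. The argument must instead pass through the unit description of the radical together with the elementary remark that invertibility in the finite subring $RH$ persists in $RG$, and this is exactly where local finiteness of $G$ is indispensable. A secondary point requiring care is that $R$ is not assumed commutative, so the Nakayama step must be the noncommutative one for finitely generated left modules, and in the first inclusion one really must move all coefficients $r_i\in R$ to the front of a product of $m$ factors $r_i(h_i-1)$—legitimate only because $R$ centralizes $G$ in $RG$.
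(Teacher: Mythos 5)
Your proof is correct. Note first that the paper itself offers no argument for this statement: it is quoted verbatim as Lemma 2 of Zhou's \emph{On clean group rings} \cite{27}, so there is no in-paper proof to compare against. Your blind reconstruction is a complete and sound proof of the cited fact. The reduction to finite subgroups is handled properly: you correctly avoid the false inclusion $J(RH)\subseteq J(RG)$ by passing through the characterization of the radical via quasi-invertibility, using that $1+x\in U(RH)$ implies $1+x\in U(RG)$ because the two rings share the same identity. The finite case is also right: the nilpotency of $\Delta(\mathbb{Z}_pH)$ for a finite $p$-group $H$ (equivalently, that $\mathbb{F}_pH$ is local) gives $(h_1-1)\cdots(h_m-1)\in p\,\mathbb{Z}H$, the centrality of $R$ relative to $H$ in $RH$ lets you collect coefficients to get $\Delta(RH)^m\subseteq pRH\subseteq J(R)\cdot RH$, and the Nakayama argument on simple left $RH$-modules (finitely generated over $R$ since $H$ is finite) gives $J(R)\cdot RH\subseteq J(RH)$; the final descent from $\Delta(RH)^m\subseteq J(RH)$ to $\Delta(RH)\subseteq J(RH)$ via the semiprimitivity of $RH/J(RH)$ is standard. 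This is essentially the classical argument one finds in Connell \cite{25} and in Zhou's paper, so your route agrees with the intended source even though the present paper suppresses the proof.
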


Having in hand the preceding three statements, we now have at our disposal all the machinery needed to establish the following major assertion.

\begin{theorem}\label{3.8}
Let $R$ be a ring and let $G$ be a locally finite $p$-group, where $p$ is a prime number with $p\in J(R)$. Then, $RG$ is a strongly NUS-nil clean ring if, and only if, $R$ is a strongly NUS-nil clean ring and $\Delta(RG)$ is a nil-ideal of $RG$.
\end{theorem}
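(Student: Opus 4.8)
The plan is to reduce the biconditional to an application of Proposition~\ref{2.13}, exactly as in the proof of Proposition~\ref{3.2}, the only new ingredient being that we now assume merely $p\in J(R)$ rather than $p\in\mathrm{Nil}(R)$, and we carry the hypothesis ``$\Delta(RG)$ is nil'' on the right-hand side to compensate. So the forward direction is immediate: if $RG$ is strongly NUS-nil clean, then $R$ is strongly NUS-nil clean by Lemma~\ref{3.1}, and $\Delta(RG)$ is nil because $J(RG)$ is nil by Lemma~\ref{8} while $\Delta(RG)\subseteq J(RG)$ by Lemma~\ref{3.7} (here is where $p\in J(R)$ and the local finiteness of the $p$-group $G$ are used). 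Thus $\Delta(RG)$, being a subset of a nil ideal, is itself a nil ideal.

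For the converse, suppose $R$ is strongly NUS-nil clean and $\Delta(RG)$ is a nil-ideal of $RG$. Then $\Delta(RG)$ is a nil ideal with $RG/\Delta(RG)\cong R$ via the augmentation map $\varepsilon$, and since $R$ is strongly NUS-nil clean, Proposition~\ref{2.13}(i) applies directly to the nil-ideal $I=\Delta(RG)$ to yield that $RG$ is strongly NUS-nil clean. This closes the equivalence.

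The only subtlety I anticipate is making sure the quotient identification $RG/\Delta(RG)\cong R$ and the hypotheses of Proposition~\ref{2.13}(i) line up — namely that $\Delta(RG)$ is genuinely a two-sided ideal (it is, being the kernel of the ring homomorphism $\varepsilon$) and genuinely nil (this is precisely the standing assumption in the statement, and in the forward direction it is a consequence of Lemma~\ref{8} and Lemma~\ref{3.7}). I do not expect any serious obstacle; the theorem is essentially a packaging of Lemmas~\ref{8},~\ref{3.1},~\ref{3.7} together with Proposition~\ref{2.13}, with the role of Lemma~\ref{3.7} being to guarantee $\Delta(RG)\subseteq J(RG)$ so that niceness of $J(RG)$ transfers to $\Delta(RG)$ in the ``only if'' part.
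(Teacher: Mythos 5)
Your proposal is correct and is exactly the argument the paper intends: the forward direction assembles Lemma~\ref{3.1}, Lemma~\ref{3.7}, and Lemma~\ref{8}, and the converse is a direct application of Proposition~\ref{2.13}(i) to the nil ideal $\Delta(RG)$ with $RG/\Delta(RG)\cong R$. The paper omits the proof, stating only that the preceding three statements supply the machinery, and your write-up fills it in precisely along those lines.
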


We finish off our research work with the following claim.

\begin{lemma}
Let $R$ be a strongly NUS-nil clean ring, and let $G$ be a group such that $\Delta(RG) \subseteq J(RG)$. Then, $RG/J(RG)$ is a strongly NUS-nil clean ring.
\end{lemma}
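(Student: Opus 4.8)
The plan is to identify the ring $RG/J(RG)$ with $R/J(R)$ and then invoke the transfer results already established for nil-ideals. First I would recall the classical fact that whenever an ideal $I$ of a ring $A$ satisfies $I\subseteq J(A)$, one has $J(A/I)=J(A)/I$. Applying this with $A=RG$ and $I=\Delta(RG)$ — which is legitimate precisely because of the standing hypothesis $\Delta(RG)\subseteq J(RG)$ — yields
$$J(RG)/\Delta(RG)=J\bigl(RG/\Delta(RG)\bigr).$$

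Next I would combine this with the augmentation isomorphism $RG/\Delta(RG)\cong R$, the same isomorphism used in the proof of Lemma \ref{3.1}. Under this identification the ideal $J(RG)/\Delta(RG)$ corresponds to $J(R)$, so the third isomorphism theorem gives
$$RG/J(RG)\;\cong\;\frac{RG/\Delta(RG)}{J(RG)/\Delta(RG)}\;\cong\;R/J(R).$$
Thus it suffices to show that $R/J(R)$ is strongly NUS-nil clean.

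Finally, since $R$ is strongly NUS-nil clean by assumption, Lemma \ref{8} guarantees that $J(R)$ is a nil-ideal of $R$, and then Proposition \ref{2.13}(i) (equivalently, the forward direction of part (ii)) shows that $R/J(R)$ is strongly NUS-nil clean. Transporting this conclusion along the isomorphism above, $RG/J(RG)$ is strongly NUS-nil clean, as wanted.

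I do not expect a real obstacle here: the only input that is not pure bookkeeping is the behaviour of the Jacobson radical under passage to a quotient by an ideal contained in it, which is standard; the remaining steps are the augmentation isomorphism, an application of the third isomorphism theorem, and the already-proven descent of the strongly NUS-nil clean property across nil-ideals. If one wished to avoid quoting $J(A/I)=J(A)/I$, an alternative is to note directly that $R/J(R)$ is a homomorphic image of $RG/J(RG)$ and also that $RG/J(RG)$ is a homomorphic image of $RG/\Delta(RG)\cong R$ modulo a nil-ideal, but the isomorphism route above is cleaner.
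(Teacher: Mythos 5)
Your proof is correct, but it takes a slightly different route from the paper's. The paper observes that $RG=\Delta(RG)+R\subseteq J(RG)+R$ and applies the second isomorphism theorem to get $RG/J(RG)\cong R/(J(RG)\cap R)$, then concludes by noting that this is a homomorphic image of the strongly NUS-nil clean ring $R$ (a closure property that follows readily from Theorem \ref{7}, though the paper never isolates it as a statement, and which can alternatively be justified by checking $J(RG)\cap R\subseteq J(R)$ so that the ideal is nil and Proposition \ref{2.13} applies). You instead use $\Delta(RG)\subseteq J(RG)$ to write $J(RG)/\Delta(RG)=J\bigl(RG/\Delta(RG)\bigr)$, identify this with $J(R)$ under the augmentation isomorphism, and obtain $RG/J(RG)\cong R/J(R)$ by the third isomorphism theorem; you then finish with Lemma \ref{8} and Proposition \ref{2.13}. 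Your version buys a cleaner logical audit trail, since every quotient you take is by an ideal already known to be nil, so only results explicitly proved in the paper are invoked; the paper's version is marginally shorter but leans on an unstated (if easy) fact about arbitrary homomorphic images. Either argument is complete and correct.
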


\begin{proof}
Seeing that $RG = \Delta(RG) + R$, because $\Delta(RG) \subseteq J(RG)$, we write $RG = J(RG) + R$. Therefore, the isomorphism
$$R/(J(RG) \cap R) \cong (J(RG) + R)/J(RG) = RG/J(RG)$$
is true, and since $R$ is strongly NUS-nil clean, one finds that $R/(J(RG) \cap R)$ is strongly NUS-nil clean too. Consequently, we conclude that $RG/J(RG)$ is a strongly NUS-nil clean ring, as requested.
\end{proof}



\vskip3.0pc

\end{document}